\newtheorem{theorem}{Theorem}[section]
\newtheorem{lemma}[theorem]{Lemma}
\newtheorem{proposition}[theorem]{Proposition}
\newtheorem{definition}{Definition}
\newtheorem{remark}[theorem]{Remark}
\newtheorem{assumption}[theorem]{Hypothesis}
\numberwithin{equation}{section}
\DeclareMathOperator{\slim}{s-lim}
\author[J. Faupin]{J{\'e}r{\'e}my Faupin}
\address[J. Faupin]{Institut Elie Cartan de Lorraine \\
Universit{\'e} de Lorraine,
57045 Metz Cedex 1, France}
\email{jeremy.faupin@univ-lorraine.fr}
\author[F. Nicoleau]{François Nicoleau}
\address[F. Nicoleau]{Laboratoire de mathématiques Jean Leray, Université de Nantes,  2 rue de la Houssinière BP 92208 44322 Nantes Cedex 03, France}
\email{francois.nicoleau@univ-nantes.fr}
\subjclass[2010]{Primaries 81U20, 37L50; Secondary 81U05}
\keywords{Quantum scattering, dissipative operator, scattering matrices, spectral singularity.}
\begin{document}
\bibliographystyle{abbrv} \title[Scattering Matrices for dissipative operators]{Scattering matrices for dissipative quantum systems}

\begin{abstract}
We consider a quantum system $S$ interacting with another system $S'$ and susceptible of being absorbed by $S'$. The effective, dissipative dynamics of $S$ is supposed to be generated by an abstract pseudo-Hamiltonian of the form $H = H_0 + V - i C^* C$. The generator of the free dynamics, $H_0$, is self-adjoint, $V$ is symmetric and $C$ is bounded. We study the scattering theory for the pair of operators $(H,H_0)$. We establish a representation formula for the scattering matrices and identify a necessary and sufficient condition to their invertibility. This condition rests on a suitable notion of spectral singularity. Our main application is the nuclear optical model, where $H$ is a dissipative Schr{\"o}dinger operator and spectral singularities correspond to real resonances.
\end{abstract}

\maketitle

\section{Introduction}

When a quantum system $S$ interacts with another quantum system $S'$, part of its energy may be irreversibly transferred to $S'$. This phenomenon of irreversible loss of energy is usually called quantum dissipation. In particular, in the theory of open quantum systems, every small quantum system experiences energy dissipation due to interactions with its environment.

In this paper, we will be especially interested in situations where the interaction between $S$ and $S'$ may result in the absorption of $S$ by $S'$. A typical example is a neutron interacting with a nucleus: When a neutron is targeted onto a nucleus, it may either be elastically scattered off the nucleus, or be absorbed by it, forming, as suggested by Bohr \cite{Bo36_01}, a new system called a compound nucleus.

A particularly efficient model in Nuclear Physics allowing for the description of both elastic scattering and absorption in neutron-nucleus scattering is called the nuclear optical model. It was introduced by Feshbach, Porter and Weisskopf in \cite{FePoWe54_01}, as a simple model describing the scattering and compound nucleus formation by neutrons -- or protons -- impinging upon complex nuclei.

The starting point in \cite{FePoWe54_01} consists in considering an effective, pseudo-Hamiltonian for the neutron, in which the interaction with the nucleus is modeled by a complex potential with negative imaginary part. In other words, in suitable units, the dynamics of the neutron is generated by a pseudo-Hamiltonian of the form
\begin{equation}\label{eq:a1}
H = - \Delta + V(x) - i W(x) ,
\end{equation}
on $L^2( \mathbb{R}^3 )$, with $W \ge 0$. In mathematical terms, a pseudo-Hamiltonian corresponds to a maximal dissipative operator in a Hilbert space, i.e., a closed operator $H$ such that $H$ is dissipative (in the sense that $\mathrm{Im}( \langle u , H u \rangle ) \le 0$ for all $u$ in the domain of $H$) and $H$ has no proper dissipative extension. In particular, (see e.g. \cite{Da07_01} or \cite{EnNa20_01}), if $H$ is a pseudo-Hamiltonian then $-iH$ is the generator of a strongly continuous one-parameter semigroup of contractions $\{ e^{-itH} \}_{t\ge0}$.

In the nuclear optical model, if the neutron is initially in a normalized state $u_0 \in L^2(\mathbb{R}^3)$, its state at time $t\ge0$ is described by the unnormalized vector $u_t := e^{-itH}u_0$. The quantity
\begin{equation*}
p_{\mathrm{scatt}}(u_0):=\lim_{t\to\infty} \| u_t \|^2
\end{equation*}
gives the probability of elastic scattering, while
\begin{equation*}
p_{\mathrm{abs}}(u_0):=1-p_{\mathrm{scatt}}(u_0)
\end{equation*}
is the probability of  formation of a compound nucleus.

Concretely, given a specific situation, the form of the potentials in \eqref{eq:a1} is deduced from the scattering data gathered from experiments. Hence the central objects of study are the scattering cross-sections, or more generally the scattering matrices. Remarkably, theoretical computations based on pseudo-Hamiltonians of the form \eqref{eq:a1} are able to reproduce experimental data to a high degree of precision. Still, it is worth mentioning that many refinements of the original model introduced in \cite{FePoWe54_01} have subsequently been proposed, replacing for instance the potentials in \eqref{eq:a1} by non-local operators, or adding higher order terms such as spin-orbit interaction terms. We refer to, e.g., \cite{Ho71_01} or \cite{Fe92_01} for an overview of models considered in the literature.

The approach of Feshbach, Porter and Weisskopf may be generalized to any quantum system $S$ interacting with another quantum system $S'$ and susceptible to being absorbed by $S'$. If $\mathcal{H}$ is the complex Hilbert space corresponding to the pure states of $S$, the effective pseudo-Hamiltonian for $S$ is of the form
\begin{equation}
H := H_0 + V - i W ,
\end{equation}
on $\mathcal{H}$, where $H_0$ is the free Hamiltonian for $S$ and $V-iW$ represents the effective interaction due to the presence of $S'$, with $W$ non-negative.

Under suitable assumptions, one can define the scattering operator $S(H,H_0)$ for $H$ and $H_0$ by the formula
\begin{equation} \label{eq:def_scatt_op}
S(H,H_0) := ( W_+(H^*,H_0) )^* W_-(H,H_0) ,
\end{equation}
where $W_-(H,H_0)$ and $W_+(H^*,H_0)$ are the wave operators associated to $(H,H_0)$ and $(H^*,H_0)$, respectively. Here $H^*=H_0+V+iW$ stands for the adjoint of $H$. Precise definitions and conditions insuring the existence of $S(H,H_0)$ will be recalled in Section \ref{subsec:diss}.

If it exists, the scattering operator $S(H,H_0)$ commutes with $H_0$ and admits, as a consequence, a fiber decomposition of the form
\begin{equation}\label{eq:def_scatt_mat}
S(H,H_0) = \int^\oplus_\Lambda S(\lambda) d\lambda, \quad \text{ in } \quad \mathcal{H} = \int^\oplus_\Lambda \mathcal{H}(\lambda) d\lambda ,
\end{equation}
where the equalities should be understood as unitary equivalences (see \eqref{eq:def_scatt_mat2} for precise identities) and $\Lambda = \sigma(H_0)$ is the absolutely continuous spectrum of $H_0$. The operators $S(\lambda) : \mathcal{H}(\lambda) \to \mathcal{H}(\lambda)$, defined for almost every $\lambda \in \Lambda$, are called the scattering matrices.

Mathematical scattering theory for dissipative operators on Hilbert spaces has been considered by many authors. See e.g. \cite{Ma75_01,Da78_01,Da80_01,Ex85_01,Ka03_01,Ne85_01,Mo68_01,Si79_01,Ka65_01,WaZu14_01,FaFaFrSc17_01,FaFr18_01} and references therein. In particular, the existence and some properties of the wave and scattering operators have been established under various assumptions. However, to our knowledge, the scattering matrices have not been precisely analyzed in this context, yet. This constitutes the main purpose of the present paper. In unitary scattering theory, i.e., in scattering theory involving a pair of self-adjoint operators, the scattering matrices have been intensively studied. See e.g. \cite{ReSi79_01,Ya92_01,Ya10_01} for textbook presentations. Our argument will use and adapt to the dissipative setting several tools developed previously in the unitary scattering theory.

As far as the scattering matrices are concerned, an important difference between unitary and dissipative scattering theories is that, in the first case, scattering matrices are unitary, while in the second case they may not even be invertible. One of our main results will show that $S(\lambda)$ is invertible \emph{if and only if} $\lambda$ is not a spectral singularity of $H$, in a sense that will be made precise in the next section. For dissipative Schr{\"o}dinger operators of the form \eqref{eq:a1}, a spectral singularity corresponds to a real resonance.

Invertibility of the scattering matrix $S(\lambda)$ has the following physical interpretation: At energy $\lambda$, to any incoming scattering state corresponds a unique outgoing state, and vice versa. Moreover, as will be explained later on, the invertibility of the scattering matrices and operator are intimately related to the ``completeness'' of the theory. According to the latter, any state can be uniquely decomposed into a sum of a scattering state and a dissipative state (i.e. a state whose probability of absorption is equal to $1$). In this respect our results are to be compared to those recently proven in \cite{FaFr18_01}.

Our paper is organized as follows: In Section \ref{sec:results}, we state our main results. Section \ref{sec:prelim} contains several properties that follow directly from our assumptions. Section \ref{sec:proof} is devoted to the proofs of our theorems.

Throughout the paper, we will use the following notations: The resolvent of the self-adjoint operator $H_0$ will be denoted by $R_0(z) = (H_0-z)^{-1}$. If $X \subset \mathbb{R}$ is a borelian set, $E_0(X) = \mathds{1}_X(H_0)$ will stand for the spectral projection corresponding to $H_0$, on $X$. Likewise, for $H_V = H_0 + V$, we set $R_V(z) = (H_V-z)^{-1}$ and $E_V(X)=\mathds{1}_X(H_V)$. The resolvent of the dissipative operator $H$ will be denoted by $R(z)=(H-z)^{-1}$. Given a Hilbert space $\mathcal{H}$, its scalar product will be denoted by $\langle \cdot , \cdot \rangle_{\mathcal{H}}$, and the norm associated to it by $\| \cdot \|_{\mathcal{H}}$. The symbol $I$ will stand for the identity operator. The set of bounded operators from a Hilbert space $\mathcal{H}_1$ to another Hilbert space $\mathcal{H}_2$ will be denoted by $\mathcal{L}( \mathcal{H}_1 ; \mathcal{H}_2 )$, or $\mathcal{L}( \mathcal{H}_1 )$ if $\mathcal{H}_1=\mathcal{H}_2$. Likewise, the sets of compact operators will be denoted by  $\mathcal{L}^\infty( \mathcal{H}_1 ; \mathcal{H}_2 )$ or $\mathcal{L}^\infty( \mathcal{H}_1 )$.

\section{Hypotheses and statement of the main results}\label{sec:results}

\subsection{General setting.}

Let $\mathcal{H}$ be a complex separable Hilbert space corresponding to the pure states of a quantum system $S$. On this Hilbert space, we consider the operator
\begin{equation}\label{eq:exprH}
H := H_0 + V -  i C^* C ,
\end{equation}
where $H_0$, the free Hamiltonian for $S$, is self-adjoint and bounded from below, and $V-iC^*C$ represents the effective interaction between $S$ and another quantum system $S'$. Without loss of generality, we suppose that $H_0 \ge 0$. We assume in addition that $V$ is symmetric, $C \in \mathcal{L}( \mathcal{H} )$, and that $V$ and $C$ are relatively compact with respect to $H_0$. It follows that
\begin{equation*}
H_V := H_0 + V,
\end{equation*}
is self-adjoint on $\mathcal{H}$, with domain $\mathcal{D}( H_V ) = \mathcal{D}( H_0 )$, and that $H$ is a closed maximal dissipative operator with domain $\mathcal{D}( H ) = \mathcal{D}( H_0 )$. (Clearly, the operator $H$ is dissipative, since for all $u \in \mathcal{D}( H )$,
\begin{equation*}
\mathrm{Im} \langle u , H u \rangle_{\mathcal{H}}  = - \| C u \|_{\mathcal{H}}^2 \le 0.)
\end{equation*}
In particular, the spectrum of $H$ is contained in the lower half-plane, $\{ z \in \mathbb{C} , \mathrm{Im}( z ) \le 0 \}$ and  $ - i H$ is the generator of a strongly continuous one-parameter semigroup of contractions $\{ e^{ - i t H } \}_{ t \ge 0 }$ (see e.g. \cite{EnNa20_01} or \cite{Da07_01}). Actually, since $H$ is a perturbation of the self-adjoint operator $H_V$ by the bounded operator $- i C^* C $, $- i H$ generates a group $\{ e^{ - i t H }\}_{t \in \mathbb{R} }$ satisfying
\begin{equation*}
\big \| e^{ - i t H } \big \| \le 1 , \, \, t \ge 0 , \qquad \qquad \big \| e^{ - i t H } \big \| \le e^{ \| C^* C \| | t | } , \, \, t \le 0 ,
\end{equation*}
(see again \cite{EnNa20_01} or \cite{Da07_01}).

The essential spectrum of $H$ can be defined by  $\sigma_{ \mathrm{ess} }( H ) := \mathbb{C} \setminus \rho_{ \mathrm{ess} }( H )$, where (see e.g. \cite{EdEv87_01})
\begin{align*}
\rho_{ \mathrm{ess} }( H ) := \big \{ z \in \mathbb{C} , \, &\mathrm{Ran}( H - z ) \text{ is closed}, \,  \mathrm{dim} \, \mathrm{Ker}( H - z ) < \infty  \text{ or }  \mathrm{codim} \, \mathrm{Ran} ( H - z ) < \infty \big \} .
\end{align*}
Using that $V$ and $C$ are relatively $H_0$-compact, one then verifies (see \cite[Section IV.5.6]{Ka66_01} and \cite[proof of Proposition B.2]{Fr18_01}) that
\begin{equation*}
\sigma_{\mathrm{ess}}( H ) = \sigma_{ \mathrm{ess} }( H_V ) = \sigma_{ \mathrm{ess} }( H_0 )  .
\end{equation*}
Moreover the discrete spectrum $\sigma_{\mathrm{disc}}(H) := \sigma( H ) \setminus \sigma_{ \mathrm{ess} }( H )$ consists of isolated eigenvalues of finite multiplicities that can only accumulate at points of the essential spectrum.

Recall that the expression of the scattering operator $S(H,H_0)$, for the pair $(H,H_0)$, is given in \eqref{eq:def_scatt_op}. Assuming that the spectrum of $H_0$ is purely absolutely continuous -- this will be part of the content of Hypothesis  \ref{V-1} --, the wave operators $W_-(H,H_0)$ and $W_+(H^*,H_0)$ are defined by
\begin{equation*}
W_-( H , H_0 ) := \underset{t\to \infty }{\slim}  \, e^{ - i t H } e^{ i t H_0 }, \quad W_+( H^* , H_0 ) := \underset{t\to \infty }{\slim}  \, e^{ i t H^* } e^{ i t H_0 },
\end{equation*}
provided that the strong limits exist. Conditions insuring the existence of $W_-( H , H_0 )$ and $W_+( H^* , H_0 )$ will be given in Section \ref{subsec:hypoth}. Note that, if they exist, $W_-(H,H_0)$ and $W_+(H^*,H_0)$, and hence also $S(H,H_0)$, are contractions. Further basic properties of the wave and scattering operators will be recalled in Section \ref{sec:prelim}.

\subsection{Hypotheses.}\label{subsec:hypoth}

In this section, we state our main hypotheses. It will be explain in Section \ref{subsec:nuclear} that these hypotheses are satisfied by the nuclear optical model \eqref{eq:a1}, under suitable conditions on the potentials $V$ and $W$.

The first two hypotheses concern the spectra of the self-adjoint operators $H_0$ and $H_V$.


%
%
\begin{assumption}[Spectrum of $H_0$]\label{V-1}
The spectrum of $H_0$ is purely absolutely continuous and has a constant multiplicity (which may be infinite).
\end{assumption}
Assuming Hypothesis \ref{V-1}, the spectral theorem ensures that there exists a unitary mapping from $\mathcal{H}$ to a direct integral of Hilbert spaces,
\begin{equation}\label{eq:ST}
\mathcal{F}_0 : \mathcal{H} \rightarrow  \int^\oplus_{ \Lambda } \mathcal{H}( \lambda ) d \lambda , \quad \Lambda := \sigma(H_0),
\end{equation}
such that $\mathcal{F}_0  H_0 \mathcal{F}_0 ^*$ acts as  multiplication by $\lambda$ on each space $\mathcal{H}( \lambda )$. Moreover, since the absolutely continuous spectrum of $H_0$ has a constant multiplicity, say $k$, all spaces $\mathcal{H}( \lambda )$ can be identified with a fixed Hilbert space $\mathcal{M}$ and \eqref{eq:ST} reduces to
\begin{equation}\label{eq:ST1}
\mathcal{F}_0 : \mathcal{H} \rightarrow \int^\oplus_{ \Lambda } \mathcal{M} \, d \lambda = L^2 (\Lambda ;\mathcal{M}), \quad {\rm{dim}} \, \mathcal{M} =k,
\end{equation}
where $L^2 (\Lambda ;\mathcal{M})$ is  the space of square integrable functions from $\Lambda$ to $\mathcal{M}$, (see e.g. \cite[Chapter 0, Section 1.3]{Ya10_01}).  For $f,g \in \mathcal{F}_0^* ( \mathrm{C}_0^{\infty}(\Lambda, \mathcal{M}))$, we have that
\begin{equation}\label{eq:diagint}
\langle f,g\rangle_{\mathcal{H}}  =\int_{\Lambda} \big\langle \Gamma_0(\lambda)f , \Gamma_0(\lambda)g \big\rangle_{\mathcal{M}} \ d\lambda,
\end{equation}
where we recall that $\langle \cdot , \cdot \rangle_{\mathcal{H}}$ and  $\langle \cdot , \cdot \rangle_{\mathcal{M}}$ denote the scalar products on $\mathcal{H}$ and $\mathcal{M}$, respectively, and
where we have set
\begin{equation}\label{eq:def_Gamma0}
\Gamma_0(\lambda)f := \mathcal{F}_0f (\lambda) ,
\end{equation}
for a.e $\lambda \in \Lambda$. Since the scattering operator \eqref{eq:def_scatt_op} commutes with $H_0$, by \eqref{eq:ST1}, we have that
\begin{equation}\label{eq:def_scatt_mat2}
\mathcal{F}_0 S(H,H_0) \mathcal{F}_0^* = \int^\oplus_\Lambda S(\lambda) d\lambda ,
\end{equation}
where the scattering matrices identify with bounded operators
\begin{equation*}
S(\lambda) : \mathcal{M} \to \mathcal{M} ,
\end{equation*}
for a.e. $\lambda \in \Lambda$.

Note that in the nuclear optical model \eqref{eq:a1}, $H_0=-\Delta$ on  $\mathcal{H}= L^2(\mathbb{R}^3)$, $\mathcal{M} = L^2 (S^{2})$ where $S^2$ denotes the sphere in $\mathbb{R}^3$, and $\mathcal{F}_0$ is related to the usual Fourier transform $\mathcal{F}$ by the formula
\begin{equation}\label{eq:usualF0}
\mathcal{F}_0 f (\lambda)(\omega) = \lambda^{\frac{1}{4}} {\mathcal{F}{f}}(\sqrt{\lambda}\omega), \quad  \lambda >0, \quad \omega \in S^2.
\end{equation}


%
%
\begin{assumption}[Spectrum of $H_V$]\label{V-2}
The singular continuous spectrum of $H_V$ is empty, its pure point spectrum is finite dimensional and $H_V$ does not have embedded eigenvalues in its absolutely continuous spectrum.
\end{assumption}
%
%


Our third assumption concerns the structure of the symmetric operator $V$.

\begin{assumption}[Factorization of  $V$]\label{V-3}
There exist an auxiliary Hilbert space $\mathcal{G}$ and operators $G : \mathcal{H} \rightarrow \mathcal{G}$ and $K : \mathcal{G} \to \mathcal{G}$ such that
\begin{equation*}
V = G^* K G ,
\end{equation*}
where $G ( H_0^{\frac12}+1 )^{-1} \in \mathcal{L}( \mathcal{H} ; \mathcal{G} )$ and $K \in \mathcal{L}( \mathcal{G} )$. Moreover, for all $z \in \mathbb{C}$, $\mathrm{Im}(z) \neq 0$,
\begin{equation*}
G R_0(z) G^* \in \mathcal{L}^\infty(\mathcal{G}).
\end{equation*}
\end{assumption}
%
%


Recall that $E_0(X)$ denotes the spectral projection associated to $H_0$ on the borelian set $X \subset \mathbb{R}$. In our next assumption, we require that $G$ be strongly smooth with respect to $H_0$. This assumption will be important in order to define {\it{continuously}} the scattering matrices $S(\lambda)$ below.

\begin{assumption}[Regularity of $G$ with respect to $H_0$]\label{V-4}
The operator $G$ of Hypothesis \ref{V-3} is strongly $H_0$-smooth with exponent $s_0 \in (\frac12,1)$ on any compact set $X \Subset \Lambda$, i.e.
\begin{equation*}
\mathcal{F}_0  G_X^* : \mathcal{G} \rightarrow \mathrm{C}^{s_0}(X; \mathcal{M}) \text{ is continuous},
\end{equation*}
where $G_X :=G E_0(X)$ and $\mathrm{C}^{s_0}(X; \mathcal{M})$ denotes the set of H{\"o}lder continuous $\mathcal{M}$-valued functions of order $s_0$.
\end{assumption}

In other words, Hypothesis \ref{V-4} means that, for all $X \Subset \Lambda$,  there exists $\mathrm{c}_X>0$ such that, for all $f \in \mathcal{G}$ and $\lambda,  \mu \in X$,
\begin{equation}\label{eq:Ssmooth}
\big \| \mathcal{F}_0  G_X^* f(\lambda) \big \|_{\mathcal{M}} \leq \mathrm{c}_X  \|f\|_{\mathcal{G}}, \ \
\big \|Ê\mathcal{F}_0  G_X^* f(\lambda) - \mathcal{F}_0  G_X^* f(\mu) \big \|_{\mathcal{M}} \leq \mathrm{c}_X | \lambda - \mu |^{s_0} \|f\|_{\mathcal{G}}.
\end{equation}
We suppose that $s_0>1/2$ in order to insure that the only possible ``strictly embedded singularities'' of $H_V$ are eigenvalues, see the proof of Proposition \ref{prop:standard_csq} for more details.

It will be recalled in the next section that Hypotheses \ref{V-1}--\ref{V-4} imply the existence of the unitary wave operators
\begin{equation*}
W_\pm( H_V , H_0 ) := \underset{t\to \pm \infty }{\slim} e^{ i t H_V } e^{ - i t H_0 } .
\end{equation*}
The scattering operator for the pair $(H_V, H_0)$ is defined by
\begin{equation}\label{eq:scatop}
S(H_V, H_0) := W_+^*( H_V , H_0 )W_-( H_V , H_0 ).
\end{equation}
It is unitary on $\mathcal{H}$ and commutes with $H_0$. The scattering matrices $S_V(\lambda) : \mathcal{M} \to \mathcal{M}$ are defined as in \eqref{eq:def_scatt_mat}, using the spectral decomposition operator $\mathcal{F}_0$ in \eqref{eq:ST}--\eqref{eq:ST1}, i.e.
\begin{equation}
\mathcal{F}_0 S( H_V , H_0 ) \mathcal{F}_0^* = \int^\oplus_{ \Lambda } S_V( \lambda ) d \lambda. \label{eq:def_S(HV,H0)}
\end{equation}
The operator $S_V( \lambda ) \in \mathcal{L}(\mathcal{M})$ is unitary for a.e. $\lambda \in \Lambda$, $S_V(\lambda)-I$ is compact, and it follows from Hypotheses \ref{V-1}--\ref{V-4} that the map $\mathring{\Lambda} \ni \lambda \mapsto S_V(\lambda) \in \mathcal{L}(\mathcal{M})$ is continuous (see e.g. \cite{Ya92_01,Ya10_01}; see also Proposition \ref{prop:standard_csq} below). Formally, $S_V(\lambda)$ are given by the well-known formula
\begin{equation}\label{eq:Kurodabis}
 S_V(\lambda)  = I -2i\pi \Gamma_0(\lambda) (V-VR_V(\lambda+i0)V)\Gamma_0^*(\lambda).
\end{equation}
See again Proposition \ref{prop:standard_csq} for a rigorous expression and justifications of notations.

We set
\begin{equation}\label{eq:def_wave}
\mathcal{F}_\pm := \mathcal{F}_0 W^*_\pm( H_V , H_0 )  : \mathcal{H} \rightarrow L^2 (\Lambda ;\mathcal{M}) ,
\end{equation}
and we note that $\mathcal{F}_\pm$ can be explicitly written as
\begin{equation}\label{eq:diagonalisation_0}
\mathcal{F}_{\pm}f (\lambda) = \Gamma_{\pm}(\lambda) f ,
\end{equation}
for $f \in \mathcal{H}$ and a.e. $\lambda \in \Lambda$, see \eqref{eq:gammapm} in Section \ref{sec:prelim}. Equation \eqref{eq:diagonalisation_0} should be compared to \eqref{eq:def_Gamma0}.


Recall that $E_V(X)$ denotes the spectral projection associated to $H_V$ on the interval $X$. In our next assumption, we require that the operator $C$ be strongly $H_V$-smooth. Here it should be noticed that Hypothesis \ref{V-2} together with the fact that $\sigma_{\mathrm{ess}}(H_V)=\sigma(H_0)$ yield
\begin{equation*}
\sigma_{\mathrm{ac}}(H_V) = \sigma(H_0)= \Lambda.
\end{equation*}
\begin{assumption}[Regularity of $C$ with respect to $H_V$]\label{V-5}
The operator $C$ is strongly $H_V$-smooth with exponent $s \in ( 0 , 1)$ on any compact set $X \Subset \Lambda$, i.e.
\begin{equation}
\mathcal{F}_{\pm}  C_X^* : \mathcal{H} \to \mathrm{C}^s( \Lambda ; \mathcal{H}) \text{ is continuous}, \label{eq:HypV-5}
\end{equation}
where $C_X :=C E_{V}( X )$. Moreover, $C R_V(z) C^*$ is compact for all $z\in\mathbb{C}$,  $\mathrm{Im}(z)\neq0$, and the map
\begin{equation}
\mathring{\Lambda} \in \lambda \mapsto C \big ( R_V( \lambda + i 0 ) - R_V( \lambda - i 0 ) \big ) C^* \in \mathcal{L}^\infty( \mathcal{H} ) , \label{eq:LAP_bounded}
\end{equation}
is bounded.
\end{assumption}
Similarly as for Hypothesis \ref{V-4}, \eqref{eq:HypV-5} means that, for all $X \Subset \Lambda$, there exists $\mathrm{c}_X>0$ such that, for all $f \in \mathcal{H}$ and $\lambda,  \mu \in \Lambda$,
\begin{equation*}
\big \| \mathcal{F}_\pm  C_X^* f(\lambda) \big \|_{\mathcal{M}} \leq \mathrm{c}_X  \|f\|_{\mathcal{H}}, \ \
\big \|Ê\mathcal{F}_\pm  C_X^* f(\lambda) - \mathcal{F}_\pm  C_X^* f(\mu) \big \|_{\mathcal{M}} \leq \mathrm{c}_X | \lambda - \mu |^{s} \|f\|_{\mathcal{H}}.
\end{equation*}
Moreover, the notations used in \eqref{eq:LAP_bounded} are justified by the fact  that the limits
\begin{equation*}
C R_V( \lambda \pm i 0 ) C^* := \lim_{\varepsilon \downarrow 0} C R_V( \lambda \pm i \varepsilon ) C^*,
\end{equation*}
 exist for all $\lambda \in \mathring{\Lambda}$, assuming \eqref{eq:HypV-5}. This will be explained in Proposition \ref{prop:standard_csq_2}. More precisely, \eqref{eq:HypV-5} implies that the map in $\eqref{eq:LAP_bounded}$ is locally H{\"o}lder continuous. Hence, roughly speaking, the condition that the map in \eqref{eq:LAP_bounded} is bounded means that, for all $\lambda_0 \in \Lambda \setminus \mathring{\Lambda}$,
 \begin{equation*}
 C \big ( R_V( \lambda + i 0 ) - R_V( \lambda - i 0 ) \big ) C^*
 \end{equation*}
 does not blow up as $\lambda \to \lambda_0$.

\subsection{Main results.}

Our first result establishes, in the dissipative setting, a representation formula for the scattering matrices $S(\lambda) : \mathcal{M} \to \mathcal{M}$. This generalizes usual Kuroda's representation formula \cite[Theorem 5.4.4']{Ya92_01} established in unitary scattering theory.  Recall that the unitary scattering matrices $S_V(\lambda)$ for the pair of self-adjoint operators $(H_V,H_0)$ are defined in \eqref{eq:def_S(HV,H0)}--\eqref{eq:Kurodabis} (see also \eqref{eq:Kuroda} below).

\begin{theorem}\label{thm:RF}
Suppose that Hypotheses \ref{V-1}--\ref{V-5} hold. The scattering matrices $S(\lambda)\in \mathcal{L}(\mathcal{M})$ for the pair of operators $(H, H_0)$ depend continuously on $\lambda \in \mathring{\Lambda}$
and are given by
\begin{align*}
S(\lambda) &= \Big( I -2\pi \Gamma_{+}(\lambda) C^* (I+iCR(\lambda+i0) C^*) C \Gamma_{+}(\lambda)^* \Big) S_V(\lambda)  ,\\
           &=  \Big( I -2\pi \Gamma_{+}(\lambda) C^* (I-iCR_V(\lambda+i0)C^*)^{-1}  C \Gamma_{+}(\lambda)^* \Big) S_V(\lambda) ,
\end{align*}
for all $\lambda \in \mathring{\Lambda}$. Moreover, for all $\lambda \in \mathring{\Lambda}$, $S(\lambda)$ is a contraction and $S(\lambda)-I$ is compact. In particular, if in addition $\mathrm{dim} \, \mathcal{M} = + \infty$, then $\| S(\lambda) \|Ê= 1$ for all $\lambda \in \mathring{\Lambda}$.
\end{theorem}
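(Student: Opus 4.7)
The plan is to reduce the computation of $S(\lambda)$ to the dissipative pair $(H,H_V)$ by peeling off the unitary scattering attached to $(H_V,H_0)$, and then to identify the remaining fibre by adapting Kuroda's stationary formula to the dissipative setting. Applying the chain rule for wave operators — available because $W_\pm(H_V,H_0)$, $W_-(H,H_V)$ and $W_+(H^*,H_V)$ all exist under Hypotheses \ref{V-1}--\ref{V-5} — I write $W_-(H,H_0)=W_-(H,H_V)W_-(H_V,H_0)$ and $W_+(H^*,H_0)=W_+(H^*,H_V)W_+(H_V,H_0)$, whence
\[
S(H,H_0)=W_+^*(H_V,H_0)\,S(H,H_V)\,W_-(H_V,H_0), \qquad S(H,H_V):=W_+^*(H^*,H_V)W_-(H,H_V).
\]
Conjugating by $\mathcal{F}_0$, using $\mathcal{F}_\pm=\mathcal{F}_0W_\pm^*(H_V,H_0)$ together with $\mathcal{F}_+^*\mathcal{F}_+=I$, I obtain
\[
\mathcal{F}_0 S(H,H_0)\mathcal{F}_0^* \;=\; \bigl[\mathcal{F}_+ S(H,H_V)\mathcal{F}_+^*\bigr]\,\bigl[\mathcal{F}_+\mathcal{F}_-^*\bigr].
\]
The second bracket equals $\mathcal{F}_0 S(H_V,H_0)\mathcal{F}_0^* = \int^{\oplus}_\Lambda S_V(\lambda)\,d\lambda$; the intertwining identities $W_-(H,H_V)H_V=HW_-(H,H_V)$ and $H_V W_+^*(H^*,H_V)=W_+^*(H^*,H_V)H$ force $S(H,H_V)$ to commute with $H_V$, so the first bracket is decomposable as $\int^{\oplus}_\Lambda \tilde S(\lambda)\,d\lambda$, whence $S(\lambda)=\tilde S(\lambda)S_V(\lambda)$.

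The technical heart of the proof — and the step I expect to be the main obstacle — is the explicit identification of $\tilde S(\lambda)$. Viewing $H-H_V=-iC^*C$ as a factored perturbation of the form $C^*KC$ with the \emph{non-self-adjoint} coupling $K=-iI$, I would transpose Kuroda's stationary derivation to this setting. Combining stationary formulas for $W_\pm(H,H_V)$ and $W_\pm(H^*,H_V)$, the spectral representation $\mathcal{F}_+$ of $H_V$, and the $H_V$-smoothness of $C$ (Hypothesis \ref{V-5}) should yield
\[
\tilde S(\lambda) \;=\; I-2\pi i\,\Gamma_+(\lambda)\,C^*\,\bigl[K-K\,CR(\lambda+i0)C^*\,K\bigr]\,C\,\Gamma_+^*(\lambda).
\]
Substituting $K=-iI$ simplifies the bracket to $-iI+CR(\lambda+i0)C^*$ and the prefactor to $-2\pi$, giving the first representation of the theorem. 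The delicate point is that Kuroda's argument traditionally relies on the self-adjointness of the perturbation; here one must justify the existence of the boundary values of $CR(\lambda+i\varepsilon)C^*$ and carry out the $\varepsilon\downarrow 0$ limits using only Hypothesis \ref{V-5} and the first resolvent identity.

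The equivalence of the two representations is a direct consequence of the first resolvent identity $R(z)-R_V(z)=iR(z)C^*C\,R_V(z)$ (valid for $\mathrm{Im}\,z>0$). Sandwiching between $C$ and $C^*$ and rearranging algebraically gives $CR(z)C^*=CR_V(z)C^*\bigl(I-iCR_V(z)C^*\bigr)^{-1}$, hence $I+iCR(z)C^*=\bigl(I-iCR_V(z)C^*\bigr)^{-1}$. Passing to the boundary $z\to\lambda+i0$ (justified by Hypothesis \ref{V-5}) converts the first form of $S(\lambda)$ into the second.

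The remaining assertions are essentially bookkeeping. Continuity of $\lambda\mapsto S(\lambda)$ on $\mathring\Lambda$ follows factor by factor from Hypothesis \ref{V-5} and Propositions \ref{prop:standard_csq}, \ref{prop:standard_csq_2}. Since $\|S(H,H_0)\|\le 1$, the contraction bound $\|S(\lambda)\|\le 1$ holds for almost every $\lambda$, hence for every $\lambda\in\mathring\Lambda$ by continuity. For compactness of $S(\lambda)-I=(\tilde S(\lambda)-I)S_V(\lambda)+(S_V(\lambda)-I)$, it suffices to show that $\Gamma_+(\lambda)C^*$ is compact; this comes from the identity $2\pi\bigl(C\Gamma_+^*(\lambda)\bigr)^*\bigl(C\Gamma_+^*(\lambda)\bigr)=C\bigl(R_V(\lambda+i0)-R_V(\lambda-i0)\bigr)C^*$, whose right-hand side is compact by Hypothesis \ref{V-5}, so $C\Gamma_+^*(\lambda)$ and its adjoint are compact. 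Finally, when $\dim\mathcal{M}=\infty$, compactness of $S(\lambda)-I$ forces $1\in\sigma_{\mathrm{ess}}(S(\lambda))$, so $\|S(\lambda)\|\ge 1$; combined with the contraction property this yields $\|S(\lambda)\|=1$.
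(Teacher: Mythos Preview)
Your approach is essentially the same as the paper's: both reduce via the chain rule to the dissipative pair $(H,H_V)$ (the paper simply presents the computation for $V=0$ first and invokes the chain rule for the general case, while you make the reduction explicit upfront), and both identify $\tilde S(\lambda)$ by a Kuroda-type stationary formula for the factored perturbation $-iC^*C$. The step you flag as ``the main obstacle'' is precisely what the paper carries out in detail, using the abelian-integral representations of the wave operators and resolvents and passing to the boundary via Proposition~\ref{prop:standard_csq_2}; your sketch of this step is correct but would need exactly that computation to be complete.

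One genuine (minor) difference: for the contraction property you invoke $\|S(H,H_0)\|\le 1$ together with continuity in $\lambda$, which is a clean shortcut. The paper instead computes $\tilde S^*(\lambda)\tilde S(\lambda)$ explicitly and shows it equals $I-4\pi Z_0(\lambda;C)(I+iCR_0(\lambda-i0)C^*)^{-1}(I-iCR_0(\lambda+i0)C^*)^{-1}Z_0(\lambda;C)^*\le I$; this is more laborious but gives a quantitative formula for the defect. Your compactness and $\|S(\lambda)\|=1$ arguments coincide with the paper's.
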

\begin{remark}\label{rk:kuroda}
\begin{enumerate}[label=\rm{(\roman*)},leftmargin=*]
\item In the particular case where $V=0$, assuming that Hypotheses \ref{V-1} and \ref{V-5} with $V=0$ hold, the previous result reduces to the following statement: Let $\tilde{S}(\lambda) \in \mathcal{L}(\mathcal{M})$ be the scattering matrices for the pair of operators $(H_0 -iC^* C, H_0)$. They  depend continuously on $\lambda\in\mathring{\Lambda}$ and  we have that
\begin{align}
\tilde{S}(\lambda) &= I -2\pi \Gamma_0(\lambda) C^* (I+iCR(\lambda+i0) C^*) C \Gamma_0(\lambda)^* , \label{eq:S0(lambda)1} \\
             &= I -2\pi \Gamma_0(\lambda) C^* (I-iCR_0(\lambda+i0)C^*)^{-1}  C \Gamma_0(\lambda)^*, \label{eq:S0(lambda)2}
\end{align}
for all $\lambda \in \mathring{\Lambda}$. Moreover, $\tilde{S}(\lambda)$ is a contraction (of norm $1$ if $\mathcal{M}$ is infinite dimensional) and $\tilde{S}(\lambda)-I$ is compact.
\item If the operator $H_V$ is ``diagonalized'' using $\Gamma_- (\lambda)$ instead of $\Gamma_+(\lambda)$ then, under the same hypotheses, the following representation formulae hold:
 \begin{align*}
S(\lambda) &=  S_V(\lambda) \Big( I -2\pi \Gamma_{-}(\lambda) C^* (I+iCR(\lambda+i0) C^*) C \Gamma_{-}(\lambda)^* \Big), \\
           &=  S_V(\lambda) \Big( I -2\pi \Gamma_{-}(\lambda) C^* (I-iCR_V(\lambda+i0)C^*)^{-1}  C \Gamma_{-}(\lambda)^* \Big).
\end{align*}
\item Under the assumptions of Theorem \ref{thm:RF}, and assuming in addition that $\mathrm{dim} \, \mathcal{M} = + \infty$, the property $\| S(\lambda) \|Ê= 1$ for all $\lambda \in \mathring{\Lambda}$ implies that
\begin{equation*}
\| S( H , H_0 ) \|Ê= 1 ,
\end{equation*}
(see, e.g., \cite[Theorem XIII.83]{ReSi78_01}).
\end{enumerate}
\end{remark}
Our next concern is to find a necessary and sufficient condition for the invertibility of $S(\lambda)$. We use the notion of spectral singularity introduced in \cite{FaFr18_01}. This notion is closely related to that considered in \cite{Du58_01,DuSc71_01,Sc60_01} within the theory of ``spectral operators''.
\begin{definition}\label{def:spec-sing}
\begin{enumerate}[label=\rm{(\roman*)},leftmargin=*]
\item Let $\lambda \in \mathring{\Lambda}$. We say that $\lambda$ is a regular spectral point of $H$ if there exists a compact interval $K_\lambda \subset \mathbb{R}$ whose interior contains $\lambda$, such that $K_\lambda$ does not contain any accumulation point of eigenvalues of $H$, and such that the limits
\begin{equation*}
C R ( \mu - i 0 ) C^* := \lim_{ \varepsilon \downarrow 0 } C R ( \mu - i \varepsilon ) C^*
\end{equation*}
exist uniformly in $\mu \in K_\lambda$ in the norm topology of $\mathcal{L}( \mathcal{H} )$. If $\lambda$ is not a regular spectral point of $H$, we say that $\lambda$ is a spectral singularity of $H$.
\item Let $\lambda \in \Lambda \setminus \mathring{\Lambda}$. We say that $\lambda$ is a regular spectral point of $H$ if there exists a compact interval $K_\lambda \subset \mathbb{R}$ whose interior contains $\lambda$, such that all $\mu \in K_\lambda \cap \mathring{\Lambda}$ are regular in the sense of (i), and such that the map
\begin{equation}\label{eq:bound_regular_threshold}
K_\lambda \cap \mathring{\Lambda} \ni \mu \mapsto C R ( \mu - i 0 ) C^* \in \mathcal{L}(\mathcal{H})
\end{equation}
is bounded.
\item If $\Lambda$ is right-unbounded, we say that $+\infty$ is regular if there exists $m>0$ such that all $\mu \in [m,\infty) \cap \mathring{\Lambda}$ are regular in the sense of (i), and such that the map
\begin{equation*}
[m,\infty) \cap \mathring{\Lambda} \ni \mu \mapsto C R ( \mu - i 0 ) C^* \in \mathcal{L}(\mathcal{H})
\end{equation*}
is bounded.
\end{enumerate}
\end{definition}
\begin{remark}\label{rk:defspec-sing}
\begin{enumerate}[label=\rm{(\roman*)},leftmargin=*]
\item  Our definitions of spectral regularity have a local meaning. For $\lambda \in \mathring{\Lambda}$, a weaker natural definition of a regular spectral point would be to require that $\lambda$ is not an accumulation point of eigenvalues of $H$ located in $\lambda-i(0,\infty)$ and that the limit
\begin{equation*}
C R ( \lambda - i 0 ) C^* := \lim_{ \varepsilon \downarrow 0 } C R ( \lambda - i \varepsilon ) C^*
\end{equation*}
exists in the norm topology of $\mathcal{L}( \mathcal{H} )$. It will be shown in Lemma \ref{lem:criterion} that, under our assumptions, this definition is actually \emph{equivalent} to Definition \ref{def:spec-sing} (i).
\item It will be shown in Section \ref{subsec:thminvert} that, under our assumptions, the set of spectral singularities in $\mathring{\Lambda}$ is a closed set of Lebesgue measure $0$.
\end{enumerate}
\end{remark}
The notion of spectral singularities in the nuclear optical model \eqref{eq:a1} and its relation with the notion of resonances will be discussed in Section \ref{subsec:nuclear} (see also \cite[Section 6]{FaFr18_01}).

Our main result is stated in the following theorem.
\begin{theorem}\label{thm:invert}
Suppose that Hypotheses \ref{V-1}--\ref{V-5} hold and let $\lambda \in \mathring{\Lambda}$. Then $S(\lambda)$ is invertible in $\mathcal{L}(\mathcal{M})$ if and only if
$\lambda$ is a regular spectral point of $H$ in the sense of Definition \ref{def:spec-sing}. In this case, its inverse is given by
\begin{align}
S(\lambda)^{-1} &= S_V(\lambda)^{-1} \Big( I + 2\pi \Gamma_{+}(\lambda) C^* (I+iCR(\lambda-i0) C^*) C \Gamma_{+}(\lambda)^* \Big)  , \label{eq:S^-1} \\
           &= S_V(\lambda)^{-1} \Big( I +2\pi \Gamma_{+}(\lambda) C^* (I-iCR_V(\lambda-i0)C^*)^{-1}  C \Gamma_{+}(\lambda)^* \Big)  . \label{eq:S^-1bis}
\end{align}
\end{theorem}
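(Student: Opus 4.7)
The plan is to leverage the representation formula of Theorem \ref{thm:RF} to reduce invertibility of $S(\lambda)$ to a concrete operator-theoretic condition on $\mathcal{H}$ that can be matched with Definition \ref{def:spec-sing}. From Theorem \ref{thm:RF}, one writes $S(\lambda) = T(\lambda) S_V(\lambda)$ with
\[ T(\lambda) := I - 2\pi \Gamma_+(\lambda) C^* \bigl(I - iCR_V(\lambda+i0)C^*\bigr)^{-1} C\Gamma_+(\lambda)^*. \]
Since $S_V(\lambda)$ is unitary, $S(\lambda)$ is invertible if and only if $T(\lambda)$ is, in which case $S(\lambda)^{-1} = S_V(\lambda)^{-1} T(\lambda)^{-1}$. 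The analysis therefore focuses on $T(\lambda)$.

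The key technical step is a \emph{swap argument}. Factor $T(\lambda) = I - K_1 K_2$ with
\[ K_1 := 2\pi \Gamma_+(\lambda) C^* \bigl(I - iCR_V(\lambda+i0)C^*\bigr)^{-1} \in \mathcal{L}(\mathcal{H};\mathcal{M}), \qquad K_2 := C\Gamma_+(\lambda)^* \in \mathcal{L}(\mathcal{M};\mathcal{H}), \]
and invoke the general fact that $I - K_1 K_2$ on $\mathcal{M}$ is invertible if and only if $I - K_2 K_1$ on $\mathcal{H}$ is, with inverses related by the standard identity $(I - K_1 K_2)^{-1} = I + K_1 (I - K_2 K_1)^{-1} K_2$. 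The product $K_2 K_1$ is then simplified using the Stone-type identity
\[ C \Gamma_+(\lambda)^* \Gamma_+(\lambda) C^* = \tfrac{1}{2\pi i} \bigl( CR_V(\lambda + i0)C^* - CR_V(\lambda - i0)C^* \bigr), \]
which holds because $\mathcal{F}_+$ diagonalizes the absolutely continuous part of $H_V$, so that $\Gamma_+(\lambda)^* \Gamma_+(\lambda)$ represents the spectral density of $H_V$ at $\lambda$. A short algebraic manipulation then yields
\[ I - K_2 K_1 = \bigl(I - iCR_V(\lambda - i0)C^*\bigr) \bigl(I - iCR_V(\lambda + i0)C^*\bigr)^{-1}. \]
The second factor is always boundedly invertible under Hypotheses \ref{V-1}--\ref{V-5}, with $(I - iCR_V(\lambda + i0)C^*)^{-1} = I + iCR(\lambda + i0)C^*$ bounded by the limiting absorption principle derived in Section \ref{sec:prelim}. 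Consequently, invertibility of $T(\lambda)$ is equivalent to invertibility of $I - iCR_V(\lambda - i0)C^*$.

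Finally, to tie this condition to Definition \ref{def:spec-sing}, one starts from the second resolvent identity $R(z) = R_V(z) + iR_V(z)C^*CR(z)$; sandwiching by $C$ and $C^*$ gives the algebraic identity $(I - iCR_V(z)C^*)(I + iCR(z)C^*) = I$ for $\mathrm{Im}(z) > 0$. Propagating this identity to the real axis shows that $I - iCR_V(\lambda - i0)C^*$ is boundedly invertible precisely when the limit $CR(\lambda - i0)C^*$ exists in operator norm, with inverse $I + iCR(\lambda - i0)C^*$; by the local criterion of Remark \ref{rk:defspec-sing}(i) (Lemma \ref{lem:criterion}), this existence is in turn equivalent to $\lambda$ being a regular spectral point. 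Chaining these equivalences completes the characterization. The inverse formula then falls out: substituting $(I - K_2 K_1)^{-1} = (I - iCR_V(\lambda + i0)C^*)(I - iCR_V(\lambda - i0)C^*)^{-1}$ into the swap identity for $(I - K_1 K_2)^{-1}$, the middle factors telescope to yield \eqref{eq:S^-1bis}, and \eqref{eq:S^-1} follows by the boundary identity $(I - iCR_V(\lambda - i0)C^*)^{-1} = I + iCR(\lambda - i0)C^*$. The principal delicate point will be justifying the Stone-type identity and the boundary identities above rigorously, tracking the limits $\lambda \pm i0$ under the $H_V$-smoothness hypothesis on $C$, and verifying that the pointwise norm existence of $CR(\lambda - i0)C^*$ is indeed equivalent to the uniform local condition of Definition \ref{def:spec-sing}(i) as anticipated in Remark \ref{rk:defspec-sing}(i).
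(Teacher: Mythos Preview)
Your proposal is correct and arrives at the same endpoint as the paper, but it takes a genuinely different route. The paper treats the two implications separately: for ``regular $\Rightarrow$ invertible'' it postulates the candidate inverse \eqref{eq:S^-1bis} and verifies by direct computation (using the Stone-type identity \eqref{eq:Stone}--\eqref{eq:Stone2}) that it is a two-sided inverse of $T(\lambda)$; for ``singular $\Rightarrow$ not invertible'' it exploits compactness of $S(\lambda)-I$ to reduce to non-injectivity, and then explicitly constructs a kernel element of the form $u = Z_V^+(\lambda;C)w$ with $w\in\mathrm{Ker}(I-iCR_V(\lambda-i0)C^*)$, checking that $u\neq 0$ via Proposition \ref{prop:standard_csq_2}(ii). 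Your approach replaces both steps by a single application of the push-through (swap) identity $I-K_1K_2$ invertible $\Leftrightarrow$ $I-K_2K_1$ invertible, together with the clean factorization $I-K_2K_1 = (I-iCR_V(\lambda-i0)C^*)(I-iCR_V(\lambda+i0)C^*)^{-1}$, which reduces the question on $\mathcal{M}$ to invertibility of a single operator on $\mathcal{H}$ and delivers the inverse formula \eqref{eq:S^-1bis} automatically from $(I-K_1K_2)^{-1} = I + K_1(I-K_2K_1)^{-1}K_2$. The paper's argument is more hands-on and has the minor advantage of exhibiting an explicit vector in $\mathrm{Ker}\,S(\lambda)$ at a spectral singularity; your argument is shorter and conceptually unified, treating both directions symmetrically without appealing to compactness or Fredholm theory at the level of $\mathcal{M}$. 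Both ultimately rely on the same ingredients: the Stone-type identity for $Z_V^+(\lambda;C)^*Z_V^+(\lambda;C)$, the invertibility of $I-iCR_V(\lambda+i0)C^*$ from Proposition \ref{prop:standard_csq_2}(ii), and Lemma \ref{lem:criterion} to identify invertibility of $I-iCR_V(\lambda-i0)C^*$ with spectral regularity.
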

Theorem \ref{thm:invert} has the following consequence for the scattering operator $S(H,H_0)$ and the wave operator $W_-(H,H_0)$. We recall that, in the context of dissipative scattering theory, the space of \emph{bound states} can be defined by
\begin{equation}\label{eq:defHb}
\mathcal{H}_{ \mathrm{b} }( H ) := \mathrm{Span} \big \{ u \in \mathcal{D}( H ) , \, \exists \lambda \in \mathbb{R} , \, H u = \lambda u \big \} .
\end{equation}
It coincides with the space of bound states for $H^*$ (see \cite[Lemma 3.1]{FaFr18_01} or \cite[Lemma 1]{Da80_01}). The \emph{dissipative spaces}, for $H$ and $H^*$, respectively, are defined by
\begin{align}
& \mathcal{H}_{ \mathrm{d} }( H ) := \big \{ u \in \mathcal{H} , \lim_{ t \to \infty } \|e^{ - i t H }u \|_{\mathcal{H}} = 0 \big \} , \label{eq:defHd} \\
& \mathcal{H}_{ \mathrm{d} }( H^* ) := \big \{ u \in \mathcal{H} , \lim_{ t \to \infty } \|e^{ i t H^* }u \|_{\mathcal{H}} = 0 \big \}. \label{eq:defH*d}
\end{align}
\begin{theorem}\label{thm:invert_S}
Suppose that Hypotheses \ref{V-1}--\ref{V-5} hold. Assume that $\Lambda \setminus \mathring{\Lambda}$ is finite and that all $\lambda \in \Lambda \setminus \mathring{\Lambda}$ are regular in the sense of Definition \ref{def:spec-sing}. If $\Lambda$ is right-unbounded, assume in addition  that $+\infty$ is regular.
Then $S(H,H_0)$ is invertible in $\mathcal{L}(\mathcal{H})$ if and only if $H$ does not have spectral singularities in $\mathring{\Lambda}$. In this case, in particular, the range of the wave operator $W_-( H , H_0 )$ is closed and given by
\begin{align}\label{eq:RanW-closed}
&Ê\mathrm{Ran}( W_-( H , H_0 ) ) = \big ( \mathcal{H}_{ \mathrm{b} }( H ) \oplus \mathcal{H}_{ \mathrm{d} }( H^* ) \big )^\perp.
\end{align}
\end{theorem}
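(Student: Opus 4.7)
The plan is to exploit the fiber decomposition $\mathcal{F}_0 S(H,H_0)\mathcal{F}_0^* = \int^\oplus_\Lambda S(\lambda)\, d\lambda$: a decomposable operator on $L^2(\Lambda;\mathcal{M})$ is invertible in $\mathcal{L}(\mathcal{H})$ if and only if its fibers are invertible for a.e.\ $\lambda$ and $\mathrm{ess\,sup}_{\lambda\in\Lambda}\|S(\lambda)^{-1}\|_{\mathcal{L}(\mathcal{M})}<\infty$. Combining this with Theorem~\ref{thm:invert}, which identifies fiberwise invertibility with spectral regularity of $\lambda$, the first statement of the theorem reduces to showing that ``no spectral singularities in $\mathring{\Lambda}$'' is equivalent to a uniform bound on $\|S(\lambda)^{-1}\|$ over $\Lambda$, after which one must translate invertibility of $S(H,H_0)$ into the range identity~\eqref{eq:RanW-closed}.

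For the sufficiency direction I would start from formula~\eqref{eq:S^-1}, which by Theorem~\ref{thm:invert} is valid pointwise on $\mathring{\Lambda}$, and use the resolvent identity to rewrite $(I-iCR_V(\lambda-i0)C^*)^{-1}=I+iCR(\lambda-i0)C^*$. Since $S_V(\lambda)$ is unitary, this yields
\[
\|S(\lambda)^{-1}\|_{\mathcal{L}(\mathcal{M})} \le 1 + 2\pi\|\Gamma_+(\lambda)C^*\|_{\mathcal{L}(\mathcal{H};\mathcal{M})}^2 \bigl(1+\|CR(\lambda-i0)C^*\|_{\mathcal{L}(\mathcal{H})}\bigr).
\]
The quantity $\|\Gamma_+(\lambda)C^*\|^2$ coincides with the norm of $CE_V'(\lambda)C^*$, which by Stone's formula is a constant multiple of the operator appearing in~\eqref{eq:LAP_bounded}; Hypothesis~\ref{V-5} thus makes it uniformly bounded on $\mathring{\Lambda}$. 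To control $\|CR(\lambda-i0)C^*\|$ I would cover $\Lambda$ by finitely many regions: compact subintervals of $\mathring{\Lambda}$, where Definition~\ref{def:spec-sing}(i) together with continuity of $\mu\mapsto CR(\mu-i0)C^*$ gives uniform boundedness; neighborhoods of each of the finitely many points of $\Lambda\setminus\mathring{\Lambda}$, where Definition~\ref{def:spec-sing}(ii) furnishes local boundedness by hypothesis; and, if $\Lambda$ is right-unbounded, a tail $[m,\infty)$ handled by the assumed regularity at $+\infty$.

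The converse direction proceeds by contradiction. Suppose $\lambda_0\in\mathring{\Lambda}$ is a spectral singularity: by Theorem~\ref{thm:invert} the fiber $S(\lambda_0)$ is not invertible, and since $S(\lambda_0)-I$ is compact (Theorem~\ref{thm:RF}) the Fredholm alternative produces a unit vector $u\in\mathcal{M}$ with $S(\lambda_0)u=0$. Continuity of $\lambda\mapsto S(\lambda)$ on $\mathring{\Lambda}$ (Theorem~\ref{thm:RF}) then forces $\|S(\lambda)u\|_{\mathcal{M}}\to 0$ as $\lambda\to\lambda_0$. Choose non-negative cut-offs $\chi_n$ with $\|\chi_n\|_{L^2}=1$ and support shrinking to $\{\lambda_0\}$ inside $\mathring{\Lambda}$, and set $f_n:=\mathcal{F}_0^*(\chi_n u)\in\mathcal{H}$. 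Then $\|f_n\|_{\mathcal{H}}=1$ while
\[
\|S(H,H_0)f_n\|_{\mathcal{H}}^2 = \int_\Lambda\chi_n(\lambda)^2\|S(\lambda)u\|_{\mathcal{M}}^2\, d\lambda\longrightarrow 0
\]
by dominated convergence, so $S(H,H_0)$ is not bounded below, contradicting invertibility.

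For the range identity~\eqref{eq:RanW-closed}, the inclusion $\mathrm{Ran}(W_-(H,H_0))\subset(\mathcal{H}_{\mathrm{b}}(H)\oplus\mathcal{H}_{\mathrm{d}}(H^*))^\perp$ is general: for $u\in\mathcal{H}_{\mathrm{b}}(H)$ with real eigenvalue $\lambda$ one gets $\langle W_-f,u\rangle_{\mathcal{H}}=\lim_t e^{it\lambda}\langle e^{itH_0}f,u\rangle_{\mathcal{H}}$, which vanishes by Riemann--Lebesgue since $\sigma(H_0)$ is purely absolutely continuous, while for $u\in\mathcal{H}_{\mathrm{d}}(H^*)$ the inner product $\langle e^{itH_0}f,e^{itH^*}u\rangle_{\mathcal{H}}$ vanishes trivially. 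For the reverse inclusion, invertibility of $S(H,H_0)$ in the factorization $S(H,H_0)=W_+(H^*,H_0)^*W_-(H,H_0)$ forces $W_-(H,H_0)$ to be bounded below, hence with closed range; one then matches the orthogonal complement with $\mathcal{H}_{\mathrm{b}}(H)\oplus\mathcal{H}_{\mathrm{d}}(H^*)$ using the structural identities for dissipative wave operators recalled in Section~\ref{sec:prelim} (cf.\ also \cite{FaFr18_01}). The main technical obstacle I anticipate is this last matching step, which requires care because the wave operators here are only contractions and not isometries, so the passage from ``closed range orthogonal to $X$'' to ``range equals $X^\perp$'' is not automatic.
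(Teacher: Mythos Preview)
Your proposal is correct and follows essentially the same approach as the paper: the sufficiency direction bounds $\|S(\lambda)^{-1}\|$ uniformly via formula~\eqref{eq:S^-1} together with Hypothesis~\ref{V-5} and the regularity assumptions at boundary points and infinity, while the necessity direction builds an approximate null sequence for $S(H,H_0)$ from a kernel vector of $S(\lambda_0)$ using continuity of $\lambda\mapsto S(\lambda)$. Your worry about the final ``matching step'' is unwarranted: Proposition~\ref{prop:existence_W-} already gives $\overline{\mathrm{Ran}(W_-(H,H_0))}=(\mathcal{H}_{\mathrm{b}}(H)\oplus\mathcal{H}_{\mathrm{d}}(H^*))^\perp$, so once $W_-$ is bounded below (hence has closed range) the identity~\eqref{eq:RanW-closed} is immediate---this is exactly what the paper obtains by citing \cite[Proposition~3.8]{FaFr18_01}.
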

\begin{remark}
The assumptions that $\Lambda \setminus \mathring{\Lambda}$ is finite and that all $\lambda \in \Lambda \setminus \mathring{\Lambda}$ are regular can be replaced by the condition that  the supremum, over $\lambda \in \Lambda \setminus \mathring{\Lambda}$, of the $L^\infty$-norms of the maps in \eqref{eq:bound_regular_threshold} is finite.
\end{remark}
A related result -- the scattering operator $S(H,H_0)$ is invertible if and only if $H$ does not have spectral singularities -- has been proven recently in \cite{FaFr18_01}, under different assumptions and following a different approach. In particular, it is assumed in \cite{FaFr18_01} that $H$ has only finitely many eigenvalues, which we do not impose here. In view of applications to Schr{\"o}dinger operators of the form \eqref{eq:a1}, this constitutes a substantial improvement since the potentials $V$ and $W$ are supposed to be compactly supported (or exponentially decaying) in \cite{FaFr18_01}, while we only need to impose a polynomial decay here. See Section \ref{subsec:nuclear} for details. Besides, in \cite{FaFr18_01}, a further abstract condition is needed in order to prove that if $H$ has a spectral singularity then $S(H,H_0)$ is not invertible (see \cite[Theorem 2.10]{FaFr18_01}). This condition is relaxed in the present paper. On the other hand, as for \eqref{eq:RanW-closed}, the results established in \cite{FaFr18_01} are significantly more precise since they show, in addition, that $\mathcal{H}_{ \mathrm{d} }( H^* )$ identifies to the vector space spanned by the generalized eigenvectors of $H^*$ corresponding to eigenvalues with positive imaginary parts (see \cite[Theorem 2.8]{FaFr18_01} or Theorem \ref{thm:mainFF} below for a precise statement). It would be interesting to prove that this result remains true under our assumptions.

\subsection{Application to the nuclear optical model}\label{subsec:nuclear}

We recall that for the nuclear optical model, the hamiltonian is given by $H = H_0 + V - i W $  on $\mathcal{H} = L^2(\mathbb{R}^3)$, where $H_0=-\Delta$ and $V$, $W$ are measurable bounded potentials, with $W \ge 0$. We write $W=C^*C$ with $C=C^*=\sqrt{W}$. Moreover, $\mathcal{M} = L^2 (S^{2})$  and $\mathcal{F}_0$ is related to the usual Fourier transform $\mathcal{F}$ by the formula \eqref{eq:usualF0}.

In this section we provide conditions on $V$ and $W$ insuring that Hypotheses \ref{V-1}--\ref{V-5} are satisfied.

\vspace{0,2cm}
\noindent \textbf{Verification of Hypothesis \ref{V-1}.} Clearly, the operator $-\Delta$ on $L^2( \mathbb{R}^3 )$ has a purely absolutely continuous spectrum equal to $[0,\infty)$, with infinite, constant multiplicity.

\vspace{0,2cm}
\noindent \textbf{Verification of Hypothesis \ref{V-2}.} For Schr{\"o}dinger operators $H_V = - \Delta + V(x)$ acting on $L^2(\mathbb{R}^3)$, assuming that $V$ is bounded and satisfies $V(x) = \mathcal{O}( \langle x \rangle^{-\rho} )$ in a neighborhood of $\infty$, with $\rho>1$, it is known that $H_V$ has no singular spectrum and no embedded eigenvalues except, perhaps, at $0$ (see e.g. \cite[Theorem XIII.33 and Theorem XIII.58]{ReSi78_01}). Let us mention that the absence of embedded positive eigenvalue also holds if $V \in L^2(\mathbb{R}^3)$ (see \cite{KoTa06_01} and references therein). Moreover, if $V(x) = \mathcal{O}( \langle x \rangle^{-\rho} )$ near $\infty$ with $\rho>1$, then the negative part of the potential $V_- = \sup (-V,0)$ belongs to $L^{3/2}(\mathbb{R}^3)$ and hence the Cwikel-Lieb-Rosembljum bound (see \cite[Theorem XIII.12]{ReSi78_01}) ensures that $H_V$ has at most  finitely many eigenvalues counting multiplicities.

\vspace{0,2cm}
\noindent \textbf{Verification of Hypothesis \ref{V-3}.} In the nuclear optical model \eqref{eq:a1}, $\mathcal{H}= \mathcal{G} = L^2 (\mathbb{R}^3)$. If $V$ is the operator of multiplication by a bounded potential satisfying
$V(x) = \mathcal{O}( \langle x \rangle^{-2\rho})$ with $\rho>0$, we can take $G= \sqrt{|V|}$ and $K =\mathrm{sgn}(V)$.
Another possible choice is $G = \langle x \rangle^{-\alpha}$, with $0 < \alpha \leq \rho$, and $K = \langle x \rangle^{2\alpha} V$.

\vspace{0,2cm}
\noindent \textbf{Verification of Hypothesis \ref{V-4}.} Hypothesis \ref{V-4} is satisfied in the nuclear optical model \eqref{eq:a1} if $V$ is bounded and satisfies $V = \mathcal{O}( \langle x \rangle^{-2\alpha})$ near $\infty$, with $\alpha > \frac{1}{2}$.
Indeed, if we take $G = \langle x \rangle^{-\alpha}$, and $K = \langle x \rangle^{2\alpha} V$, then $G$ is strongly $H_0$-smooth with exponent belonging to $( \frac{1}{2},1)$ (see \cite[Proposition 1.6.1]{Ya10_01}).

\vspace{0,2cm}
\noindent \textbf{Verification of Hypothesis \ref{V-5}.} In the nuclear optical model \eqref{eq:a1}, it follows from \cite[Proposition 1.6.1]{Ya10_01} that, for any $s > \frac{1}{2}$,  the map $\mathring{\Lambda} \ni \lambda \to \Gamma_0(\lambda) \langle x \rangle^{-s} \in \mathcal{L}(L^2(\mathbb{R}^3), L^2(S^2))$ is Hölder continuous. Moreover, if $V \in \mathrm{C}^2(\mathbb{R}^3) $ and satisfies, in a neighborhood of infinity, $\partial_x^{\alpha} V(x)=\mathcal{O}( \langle x \rangle^{-\rho-|\alpha|} )$, $|\alpha| \leq 2$, for some $\rho>0$, then the limiting absorption principle (see \cite{Mou81_01,JMP84_01,BoGe96_01}) asserts  that,  for $s > \frac{1}{2}$,  $\| \langle x \rangle^{-s} (R_V(z)-R_V(z') ) \langle x \rangle^{-s} \| \leq C \,  |z-z'|^{s-1/2}$ for all $z,z' \in \{ z \in \mathbb{C} , \, \pm \mathrm{Im}(z) \ge 0 \}$. Thus, if in addition $W(x) = \mathcal{O}( \langle x \rangle^{-\delta} )$ near $\infty$ with $\delta>1$, we see that $\lambda \to \Gamma_{\pm}(\lambda) C^*$ (with $C=\sqrt{W}$) is Hölder continuous. In other words, $C$ is strongly $H_V$-smooth.

To insure that \eqref{eq:LAP_bounded} holds for the nuclear optical model, it remains to verify that $C R_V(\lambda\pm i0) C^*$ is bounded for $\lambda$ in a neighborhood of $0$ and $\infty$. Near $\infty$, using the explicit form of the resolvent of $-\Delta$ and a Neumann series argument, it is well-known (see for instance \cite[Lemma 2.1]{Je80_01}) that, for any $V$ bounded, $V(x) = \mathcal{O}( \langle x \rangle^{-\rho} )$ near $\infty$ with $\rho>1$, the following estimate holds:
\begin{equation}
\| \langle x \rangle^{-s} R_V (\lambda \pm i0) \langle x \rangle^{-s} \| = \mathcal{O}(\lambda^{-\frac{1}{2}}) ,\quad  \lambda \rightarrow \infty ,
\end{equation}
provided that $s > \frac{1}{2}$. In particular, if in addition $W(x) = \mathcal{O}( \langle x \rangle^{-\delta} )$ near $\infty$ with $\delta>1$, then
\begin{equation}\label{eq:art_1}
\lambda \mapsto C \big ( R_V( \lambda + i 0 ) - R_V( \lambda - i 0 ) \big ) C^*
\end{equation}
is indeed bounded in a neighborhood of $\infty$. Note that if $V$ is supposed to be of class $\mathrm{C}^2$, the short-range condition $V(x) = \mathcal{O}( \langle x \rangle^{-\rho} )$ near $\infty$ with $\rho>1$ can be replaced by the long-range condition $V(x) = \mathcal{O}( \langle x \rangle^{-\rho} )$ near $\infty$ with $\rho>0$ (see e.g. \cite{Ro90_01}).

To control \eqref{eq:art_1} in a neighborhood of $\lambda=0$, we need to make additional assumptions. We suppose that $V(x) = \mathcal{O}( \langle x \rangle^{-\rho} )$ with $\rho>3$, and that $0$ is neither an eigenvalue nor a resonance of $H_V$. These two conditions then imply that, for any $s>\frac{1}{2}$, $\langle x \rangle^{-s} R_V(\lambda \pm i0) \langle x \rangle^{-s}$ is bounded near $\lambda=0$ (see \cite{JeKa79_01,Sc07_01,ErSc04_01,JeNe01_01} for details), and hence \eqref{eq:art_1} is bounded near $0$ provided that $W(x) = \mathcal{O}( \langle x \rangle^{-\delta} )$ near $\infty$ with $\delta>1$. Let us mention that if one imposes a sign condition on $V$ near infinity, together with a positive virial condition, also near infinity, then the polynomial decay $V(x) = \mathcal{O}( \langle x \rangle^{-\rho} )$ with $\rho > 3$ can be weakened to $\rho > 0$; see \cite{FoSk04_01}.

\vspace{0,2cm}
\noindent \textbf{Spectral singularities in the nuclear optical model.}
For dissipative Schr{\"o}dinger operators of the form \eqref{eq:a1}, if $V : \mathbb{R}^3 \to \mathbb{R}$ and $C : \mathbb{R}^3 \to \mathbb{C}$ are bounded, decaying potentials, we have that $\Lambda = [0,\infty)$ and a spectral singularity $\lambda \in (0,\infty)$ corresponds to the real resonance $-\lambda^{1/2}$, (see \cite[Section 6]{FaFr18_01} for details). Recall that if $V$ and $C$ are supposed to be compactly supported, a resonance may be defined as a pole of the map
\begin{equation*}
\mathbb{C} \ni z \mapsto ( H - z^2 )^{-1} : L^2_{ \mathrm{c} }( \mathbb{R}^3 ) \to L^2_{ \mathrm{loc} }( \mathbb{R}^3 ) ,
\end{equation*}
(given as the meromorphic extension of the meromorphic map originally defined on $\{ z \in \mathbb{C} , \mathrm{Im}( z ) > 0 \}$), where $L^2_{ \mathrm{c}Ê}( \mathbb{R}^3 )= \{ u \in L^2( \mathbb{R}^3 ) , u \text{ is compactly supported} \}$ and $L^2_{Ê\mathrm{loc} }( \mathbb{R}^3 ) = \{ u : \mathbb{R}^3 \to \mathbb{C}, u \in L^2( K ) \text{ for all compact set } K \subset \mathbb{R}^3 \}$. If $V$ and $W=C^*C$ are supposed to satisfy $V(x) = \mathcal{O}( \langle x \rangle^{-\rho} )$ and $W(x) = \mathcal{O}( \langle x \rangle^{-\delta} )$ near $\infty$, with $\rho,\delta >2$, then, more generally, $\pm \lambda^{1/2}$ (with $\lambda>0$) may be called a resonance of $H=-\Delta+V-iW$ if the equation $(H-\lambda)u=0$ admits a solution $u \in H^2_{\mathrm{loc}}(\mathbb{R}^3) \setminus L^2( \mathbb{R}^3 )$ satisfying the Sommerfeld radiation condition
\begin{equation*}
u(x)=|x|^{\frac32 - 1 } e^{\pm i \lambda^{\frac12} | x |Ê} \Big ( a ( \frac{x}{|x|} ) + o(1) \Big ) , \quad |x| \to \infty ,
\end{equation*}
with $a \in L^2( S^2 )$, $a\neq 0$.

As for the regularity of the threshold $0$, and of $+\infty$, we can rely on the following results. Assuming that $V(x) = \mathcal{O}( \langle x \rangle^{-\rho} )$ and
$W(x) = \mathcal{O}( \langle x \rangle^{-\delta} )$ near $\infty$, with $\rho,\delta >2$, $W>0$ on a nontrivial set,  it is proven in \cite[Theorem 1.1]{Wa11_01} that $0$ is a regular spectral point of $H=-\Delta+V-iW$. As for the regularity of $\infty$, one can use as before a Neumann series argument together with the fact that
\begin{equation*}
\rho_1 ( - \Delta - (\lambda - i 0 ) )^{-1} \rho_2 = \mathcal{O}( \lambda^{-\frac12} ),
\end{equation*}
as $\lambda \to \infty$, for any $\rho_1,\rho_2:\mathbb{R}^3 \to \mathbb{R}$ such that $| \rho_i(x) | \le C \langle x \rangle^{-\gamma}$, $\gamma > 1/2$ (see for instance \cite[Lemma 2.1]{Je80_01}).

\vspace{0,2cm}

Now, summarizing the above discussion, we obtain the following result.
\begin{theorem}\label{thm:optical1}
Let $V,W \in L^\infty( \mathbb{R}^3 ; \mathbb{R} )$. Assume that $V \in \mathrm{C}^2(\mathbb{R}^3)$ and that $V$ satisfies, for $|\alpha|\leq 2$,  $\partial_x^{\alpha} V(x) = \mathcal{O}( \langle x \rangle^{-\rho-|\alpha|} )$ near $\infty$ with $\rho>3$. Suppose that $W(x) \ge 0$, $W(x)>0$ on a non-trivial open set and $W(x) = \mathcal{O}( \langle x \rangle^{-\delta} )$ near $\infty$ with $\delta>2$. Assume in addition that $0$ is neither an eigenvalue nor a resonance of $H_V=-\Delta+V(x)$.

For all $\lambda>0$, the scattering matrices corresponding to $H=-\Delta+V(x)-iW(x)$ and $H_0=-\Delta$ are given by
\begin{align*}
S(\lambda) &= \Big( I -2\pi \Gamma_{+}(\lambda) (W-iWR(\lambda+i0)W) \Gamma_{+}(\lambda)^* \Big) S_V(\lambda)  ,\\
           &=  \Big( I -2\pi \Gamma_{+}(\lambda) {\sqrt{W}} (I-i{\sqrt{W}}R_V(\lambda+i0){\sqrt{W}})^{-1}  {\sqrt{W}} \Gamma_{+}(\lambda)^* \Big) S_V(\lambda) ,
\end{align*}
where $S_V(\lambda)$ are the scattering matrices corresponding to $H_V$ and $H_0$.

Moreover, for all $\lambda>0$, the operator $S(\lambda) \in \mathcal{L}( L^2( S^2 ) )$ is invertible if and only if $\lambda$ is not a spectral singularity of $H$.

Finally, the scattering operator $S(H,H_0)$ is invertible in $\mathcal{L}( L^2( \mathbb{R}^3 ) )$ if and only if $H$ does not have spectral singularities in $(0,\infty)$, and, in this case, we have that
\begin{align*}
Ê\mathrm{Ran}( W_-( H , H_0 ) ) = \mathcal{H}_{ \mathrm{d} }( H^* )^\perp.
\end{align*}
\end{theorem}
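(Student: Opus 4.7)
The proof of Theorem \ref{thm:optical1} will be essentially an application of the abstract results Theorems \ref{thm:RF}, \ref{thm:invert} and \ref{thm:invert_S} to the Schr\"odinger setting. The plan is therefore organized in three steps: verify Hypotheses \ref{V-1}--\ref{V-5}, translate the abstract formulas into the concrete ones, and finally reduce $(\mathcal{H}_{\mathrm{b}}(H)\oplus\mathcal{H}_{\mathrm{d}}(H^*))^\perp$ to $\mathcal{H}_{\mathrm{d}}(H^*)^\perp$.

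First, I would check that all five hypotheses hold under the assumptions on $V$ and $W$. This is the content of the paragraph-by-paragraph discussion preceding the theorem, so I would simply collect those verifications. Hypothesis \ref{V-1} is standard for $-\Delta$. Hypothesis \ref{V-2} follows from the $C^2$ and decay assumptions on $V$ via \cite[Theorem XIII.33 and Theorem XIII.58]{ReSi78_01} and the Cwikel--Lieb--Rozenbljum bound, which apply because $V_- \in L^{3/2}(\mathbb{R}^3)$ when $\rho>3$. For Hypotheses \ref{V-3} and \ref{V-4} I would take $G=\langle x\rangle^{-\alpha}$ and $K=\langle x\rangle^{2\alpha}V$ for some $\alpha\in(1/2,\rho/2)$, and invoke \cite[Proposition 1.6.1]{Ya10_01} to get strong $H_0$-smoothness. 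For Hypothesis \ref{V-5}, H\"older continuity of $\lambda\mapsto\Gamma_\pm(\lambda)C^*$ follows from the limiting absorption principle \cite{Mou81_01,JMP84_01,BoGe96_01} combined with \cite[Proposition 1.6.1]{Ya10_01} and the decay $W=\mathcal{O}(\langle x\rangle^{-\delta})$, $\delta>2$; boundedness of the map in \eqref{eq:LAP_bounded} near $+\infty$ follows from the standard Jensen-type estimate $\|\langle x\rangle^{-s}R_V(\lambda\pm i0)\langle x\rangle^{-s}\|=\mathcal{O}(\lambda^{-1/2})$ (see \cite[Lemma 2.1]{Je80_01}), and near $0$ from \cite{JeKa79_01,Sc07_01,ErSc04_01,JeNe01_01}, using that $0$ is neither an eigenvalue nor a resonance of $H_V$.

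Once the hypotheses are verified, the representation formulas for $S(\lambda)$ follow from Theorem \ref{thm:RF} by substituting $C=C^*=\sqrt{W}$ and $C^*C=W$, which produces exactly the two expressions in the theorem. The pointwise invertibility statement (for $\lambda>0$) is precisely Theorem \ref{thm:invert}. For the global invertibility of $S(H,H_0)$, I apply Theorem \ref{thm:invert_S}: here $\Lambda\setminus\mathring{\Lambda}=\{0\}$ is finite, the regularity of $0$ as a spectral point of $H$ in the sense of Definition \ref{def:spec-sing}(ii) is guaranteed by \cite[Theorem 1.1]{Wa11_01}, and the regularity of $+\infty$ in the sense of Definition \ref{def:spec-sing}(iii) follows from the resolvent bound $\mathcal{O}(\lambda^{-1/2})$ mentioned above combined with the Neumann expansion $R(z)=R_V(z)(I+iC^*CR_V(z))^{-1}$.

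The only non-routine piece is then to show that $\mathcal{H}_{\mathrm{b}}(H)=\{0\}$, so that \eqref{eq:RanW-closed} collapses to $\mathrm{Ran}(W_-(H,H_0))=\mathcal{H}_{\mathrm{d}}(H^*)^\perp$. For this, suppose $u\in\mathcal{D}(H)$ satisfies $Hu=\lambda u$ with $\lambda\in\mathbb{R}$. Taking imaginary parts of $\langle u,Hu\rangle=\lambda\|u\|^2$ yields
\begin{equation*}
0=\mathrm{Im}\langle u,Hu\rangle=-\|Cu\|^2=-\|\sqrt{W}u\|^2,
\end{equation*}
so $u$ vanishes almost everywhere on the open set $\{W>0\}$. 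Since $u\in H^2(\mathbb{R}^3)$ solves the Schr\"odinger equation $(-\Delta+V-\lambda)u=0$ with $V\in L^\infty$, the strong unique continuation principle (Aronszajn--Cordes) forces $u\equiv 0$. Hence $\mathcal{H}_{\mathrm{b}}(H)=\{0\}$, completing the proof. The main subtlety throughout is really the threshold analysis at $\lambda=0$, where the required non-resonance assumption is imposed directly as a hypothesis; apart from that, everything is a matter of carefully invoking previously established results.
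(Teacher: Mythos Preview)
Your proposal is correct and follows essentially the same approach as the paper: the theorem is obtained by collecting the verifications of Hypotheses \ref{V-1}--\ref{V-5} given in Section \ref{subsec:nuclear}, applying Theorems \ref{thm:RF}, \ref{thm:invert} and \ref{thm:invert_S} with $C=\sqrt{W}$, and then eliminating $\mathcal{H}_{\mathrm{b}}(H)$ via the unique continuation principle (the paper cites \cite[Theorem XIII.63]{ReSi78_01}). One trivial slip: your resolvent identity should read $R(z)=R_V(z)(I-iC^*CR_V(z))^{-1}$, with a minus sign.
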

As for the last statement of Theorem \ref{thm:optical1}, which should be compared to \eqref{eq:RanW-closed}, we emphasize that $\mathcal{H}_{\mathrm{b}}(H) = \{ 0 \}$ for dissipative Schr{\"o}dinger operators. This follows from the unique continuation principle (see, e.g., \cite[Theorem XIII.63]{ReSi78_01}) and the assumption that $W(x)>0$ on some non-trivial open set.

We mention that, in the particular case where $V$ and $W$ are compactly supported, the map $\lambda \mapsto S(\lambda^2)$ has a meromorphic continuation to the entire complex plane (see e.g. \cite{Ya92_01}) and $H$ has finitely many spectral singularities (see e.g. \cite{DyZw17_01}).  Furthermore, using the representation formula of Theorem \ref{thm:optical1}, it is possible to verify that, in a suitable sense, any $\lambda > 0$ is generically a regular spectral point of $H$. On the other hand, for any $\lambda>0$, one can construct compactly supported potentials $V$ and $W$ such that $\lambda$ is a spectral singularity of $H$ (see \cite{Wa12_01}). If the $L^\infty$-norms of $V$ and $W$ are small enough, it is known that $H$ is similar to $H_0$ and hence, in particular, $H$ does not have spectral singularities (see \cite{Ka65_01,FaFaFrSc17_01}). It would be interesting to find more general conditions on $V$ and $W$ insuring the absence of spectral singularities for $H$.

\section{Basic properties}\label{sec:prelim}

In this section we recall various properties that follow from our hypotheses. Subsection \ref{subsec:smooth} is concerned with consequences of the regularity assumptions that are imposed by Hypotheses \ref{V-4} and \ref{V-5}. Several results of Subsection \ref{subsec:smooth} are taken from \cite{Ya92_01,Ya10_01}. In Subsection \ref{subsec:diss}, we recall the existence and basic properties of the wave and scattering operators in dissipative scattering theory; Proofs can be found in \cite{Ma75_01,Da78_01,Da80_01,FaFr18_01}.

\subsection{Strong smoothness}\label{subsec:smooth}

Recall that Hypothesis \ref{V-4} assumes that the operator $G:\mathcal{H} \to \mathcal{G}$ is strongly $H_0$-smooth on any compact set $X \Subset \Lambda$, with exponent $s_0 \in (\frac12,1)$, in the sense that, for all $f \in \mathcal{G}$, $\mathcal{F}_0  G_X^* f \in L^2( X ; \mathcal{M} )$ admits a representant belonging to $\mathrm{C}^{s_0}(X; \mathcal{M})$, and that there exists $\mathrm{c}_X>0$ such that, for all $f \in \mathcal{G}$ and $\lambda,  \mu \in X$,
\begin{equation*}
\big \| \mathcal{F}_0  G_X^* f(\lambda) \big \|_{\mathcal{M}} \leq \mathrm{c}_X \|f\|_{\mathcal{G}}, \ \
\big \|Ê\mathcal{F}_0  G_X^* f(\lambda) - \mathcal{F}_0  G_X^* f(\mu) \big \|_{\mathcal{M}} \leq \mathrm{c}_X | \lambda - \mu |^{s_0} \|f\|_{\mathcal{G}} ,
\end{equation*}
where $G_X = G E_0(X)$. In particular, we see that, for all $\lambda \in \Lambda$, the operator $Z_0(\lambda ; G) : \mathcal{G} \to \mathcal{M}$, defined by the relation
\begin{equation}\label{eq:Z0}
Z_0(\lambda;G) f = (\mathcal{F}_0 G^* f) (\lambda),
\end{equation}
is bounded, and the map $\Lambda \in \lambda \mapsto Z_0(\lambda;G) \in \mathcal{L}( \mathcal{G} ; \mathcal{M} )$ is locally Hölder continuous. Note that, according to \eqref{eq:def_Gamma0},
\begin{equation}\label{eq:Z0inGamma}
Z_0(\lambda;G) = \Gamma_0(\lambda) G^* ,
\end{equation}
for a.e. $\lambda \in \Lambda$.

We recall that the wave operators for the self-adjoint pair $(H_V,H_0)$ are defined by
\begin{equation}\label{eq:wave_unitary}
W_\pm( H_V , H_0 ) := \underset{t\to \pm \infty }{\slim} e^{ i t H_V } e^{ - i t H_0 }, \quad W_{ \pm } ( H_0 , H_V ) := \underset{t\to \pm \infty }{\slim} e^{ i t H_0 } e^{ - i t H_V } \Pi_{ \mathrm{ac} }( H_V ).
\end{equation}
Here $\mathcal{H}_{ \mathrm{ac} }( H_V )$ denotes the absolutely continuous spectral subspace of $H_V$ and $\Pi_{ \mathrm{ac} }( H_V )$ is the orthogonal projection onto $\mathcal{H}_{ \mathrm{ac} }( H_V )$. Likewise, we denote by $\mathcal{H}_{ \mathrm{pp} }( H_V )$ the pure point spectral subspace of $H_V$.

Combined with Hypotheses \ref{V-1}--\ref{V-3}, Hypothesis \ref{V-4} has several consequences that will be important in our analysis. We summarize them in the following proposition, and recall a few arguments of their proof for the convenience of the reader.
\begin{proposition}\label{prop:standard_csq}
Suppose that Hypotheses \ref{V-1}--\ref{V-4} hold. Then the following properties are satisfied:
\begin{enumerate}[label=\rm{(\roman*)},leftmargin=*]
\item For all compact set $X \Subset \Lambda$, $G$ is $H_0$-smooth on $X$ in the usual sense of Kato \cite{Ka65_01}, i.e., there exists a constant $\tilde{\mathrm{c}}_X > 0$, such that
\begin{align}\label{eq:H0smooth}
\int_{ \mathbb{R} } \big \| G e^{ - i t H_0 }  u \big \|_{\mathcal{G}}^2 dt \le \tilde{\mathrm{c}}_X^2 \|  u \|_{\mathcal{H}}^2 ,
\end{align}
for all $u \in E_{0}(X) \mathcal{H}$.
\item The maps
\begin{equation*}
\{ z \in \mathbb{C} ,  \, \mathrm{Re}(z) \in  \mathring{\Lambda} , \, \pm \mathrm{Im}(z) \ge 0 \} \ni z \mapsto GR_\sharp(z)G^* \in \mathcal{L}^\infty(\mathcal{G}) ,
\end{equation*}
where $R_\sharp$ stands for $R_0$ or $R_V$, and, in particular, $\mathring{\Lambda} \ni \lambda \mapsto GR_\sharp( \lambda \pm i 0 )G^* \in \mathcal{L}^\infty(\mathcal{G}) $,
are locally H{\"o}lder continuous of order $s_0$.
\item Let $f \in \mathcal{F}_0^* ( \mathrm{C}_0^{\infty}(\mathring{\Lambda}, \mathcal{M}))$. The maps
\begin{equation*}
\{ z \in \mathbb{C} ,  \, \mathrm{Re}(z) \in \mathring{\Lambda} , \, \pm \mathrm{Im}(z) \ge 0 \} \ni z \mapsto GR_\sharp(z) f \in \mathcal{G} ,
\end{equation*}
where $R_\sharp$ stands for $R_0$ or $R_V$, and, in particular, $\mathring{\Lambda} \ni \lambda \mapsto GR_\sharp(\lambda \pm i0) f \in \mathcal{G}$, are continuous.
\item The wave operators \eqref{eq:wave_unitary} exist and are asymptotically complete, i.e.,
\begin{align*}
& \mathrm{Ran} ( W_\pm( H_V , H_0 ) ) = \mathcal{H}_{ \mathrm{ac} }( H_V ) = \mathcal{H}_{ \mathrm{pp} }( H_V )^\perp , \\
& \mathrm{Ran} ( W_\pm( H_0 , H_V ) ) = \mathcal{H} .
\end{align*}
\item For $f \in \mathcal{F}_0^* ( \mathrm{C}_0^{\infty}(\mathring{\Lambda}, \mathcal{M}))$ and a.e. $\lambda \in \Lambda$, let
\begin{equation}\label{eq:gammapm}
\Gamma_{\pm}(\lambda)f := \Gamma_0(\lambda)(I-VR_V(\lambda\pm i0)) f.
\end{equation}
Then the operator $\mathcal{F}_{\pm} : \mathcal{F}_0^* ( \mathrm{C}_0^{\infty}(\mathring{\Lambda}, \mathcal{M})) \to L^2(\Lambda, \mathcal{M})$ defined by
\begin{equation*}
\mathcal{F}_{\pm}f (\lambda) = \Gamma_{\pm}(\lambda) f
\end{equation*}
extends to a unitary map from $\mathcal{H}$ to $L^2(\Lambda, \mathcal{M})$ satisfying $ W_\pm( H_V , H_0 ) = \mathcal{F}_\pm^* \mathcal{F}_0 $.
\item For all $X \Subset \Lambda$, the map
\begin{equation*}
\mathcal{F}_\pm G_X^* : \mathcal{G} \rightarrow \mathrm{C}^{s_0}(X; \mathcal{M}) \text{ is continuous}.
\end{equation*}
In particular, the map
\begin{equation}\label{eq:ZV}
\Lambda \in \lambda \mapsto Z_V^\pm(\lambda;G) := \Gamma_{\pm}(\lambda) G^* \in \mathcal{L}( \mathcal{G} ; \mathcal{M} ),
\end{equation}
is locally Hölder continuous of order $s_0$ and we have that
\begin{equation}\label{eq:decom_Gamma_pm}
Z_V^\pm(\lambda;G) = Z_0(\lambda;G) - Z_0(\lambda;G) KG R_V(\lambda \pm i0) G^*.
\end{equation}
\item For all $\lambda \in \mathring{\Lambda}$, the operators $Z_0(\lambda;G)$ and $Z_V^\pm(\lambda;G)$ are compact.
 \item The unitary scattering matrices $S_V(\lambda)$ defined by \eqref{eq:def_S(HV,H0)} are given by Kuroda's representation formulae
\begin{align}
S_V(\lambda) &= I - 2i\pi Z_0(\lambda;G) (I-KGR_V(\lambda+i0)G^*) K Z_0^* (\lambda;G), \label{eq:Kuroda} \\
             &= I - 2i\pi Z_0(\lambda;G) (I+KGR_0(\lambda+i0)G^*)^{-1} K Z_0^* (\lambda;G) . \label{eq:Kuroda_2}
\end{align}
Moreover, the map $\mathring{\Lambda} \ni \lambda \mapsto S_V(\lambda) \in \mathcal{L}( \mathcal{M} )$ is continuous and for all $\lambda \in \mathring{\Lambda}$, $S_V(\lambda)-I$ is compact.
\end{enumerate}
\end{proposition}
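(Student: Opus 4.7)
The plan is to treat the eight items (i)--(viii) in the order given, each building on its predecessors. For (i), I apply Plancherel's theorem in $t$ to the scalar function $t\mapsto \langle Ge^{-itH_0}u,f\rangle_{\mathcal{G}}$, which, using the spectral representation and the identity $\Gamma_0(\lambda)G^* = Z_0(\lambda;G)$, is the Fourier transform of $\lambda \mapsto \langle \Gamma_0(\lambda)u, Z_0(\lambda;G)f\rangle_{\mathcal{M}}$ restricted to $X$; the pointwise bound $\|Z_0(\lambda;G)\|_{\mathcal{L}(\mathcal{G};\mathcal{M})} \le \mathrm{c}_X$ from Hypothesis \ref{V-4} then yields \eqref{eq:H0smooth} after taking the supremum over $\|f\|_{\mathcal{G}}\le 1$. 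Item (ii) for $R_0$ rests on the Herglotz-type identity
\begin{equation*}
\langle f, GR_0(z)G^* g\rangle_{\mathcal{G}} = \int_\Lambda \frac{\langle Z_0(\lambda;G)f, Z_0(\lambda;G)g\rangle_{\mathcal{M}}}{\lambda-z}\, d\lambda,
\end{equation*}
combined with the operator-valued Privalov/Plemelj-Sokhotski theory for Cauchy integrals with $s_0$-H\"older continuous kernels. For $R_V$, I use the resolvent identity
\begin{equation*}
GR_V(z)G^* = \bigl(I + GR_0(z)G^* K\bigr)^{-1} GR_0(z)G^*,
\end{equation*}
where invertibility of $I + GR_0(\lambda\pm i0)G^* K$ on $\mathring{\Lambda}$ follows from analytic Fredholm theory (compactness from Hypothesis \ref{V-3}) together with Hypothesis \ref{V-2}, which forbids embedded eigenvalues. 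Item (iii) is analogous, applied to the explicit Cauchy representation of $R_0(z)f$ for $f \in \mathcal{F}_0^*(\mathrm{C}_0^{\infty}(\mathring{\Lambda},\mathcal{M}))$, whose density is smooth and compactly supported.

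Items (iv) and (v) then follow from the standard output of Kato's smooth perturbation theory (see \cite{Ya92_01}): (i) identifies $V = G^* K G$ as an $H_0$-smooth short-range perturbation, which produces existence and asymptotic completeness of $W_\pm(H_V, H_0)$ as well as the stationary representation \eqref{eq:gammapm} of $\Gamma_\pm$ together with $W_\pm = \mathcal{F}_\pm^* \mathcal{F}_0$. For (vi), multiplying \eqref{eq:gammapm} on the right by $G^*$ yields
\begin{equation*}
Z_V^\pm(\lambda;G) = Z_0(\lambda;G) - Z_0(\lambda;G)\, K \, GR_V(\lambda \pm i0)G^*,
\end{equation*}
which is exactly \eqref{eq:decom_Gamma_pm}; both factors on the right-hand side are locally $s_0$-H\"older by Hypothesis \ref{V-4} and item (ii), so $Z_V^\pm$ inherits local $s_0$-H\"older continuity.

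For (vii), the Plemelj-Sokhotski formula
\begin{equation*}
Z_0(\lambda;G)^* Z_0(\lambda;G) = \frac{1}{2i\pi}\bigl(GR_0(\lambda+i0)G^* - GR_0(\lambda-i0)G^*\bigr),
\end{equation*}
combined with compactness of $GR_0(z)G^*$ for $\mathrm{Im}(z)\neq 0$ (Hypothesis \ref{V-3}) and norm-convergence to the boundary values (item (ii)), shows $Z_0(\lambda;G)^* Z_0(\lambda;G)$ is compact, hence so is $Z_0(\lambda;G)$ via polar decomposition; \eqref{eq:decom_Gamma_pm} then transfers compactness to $Z_V^\pm(\lambda;G)$. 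Finally, (viii) is the usual stationary derivation of Kuroda's formula, obtained by inserting \eqref{eq:gammapm} into $S(H_V, H_0) = W_+^*(H_V,H_0)W_-(H_V,H_0)$, identifying fibers in $\mathcal{F}_0 S(H_V,H_0)\mathcal{F}_0^*$, and using the resolvent identity to convert between \eqref{eq:Kuroda} and \eqref{eq:Kuroda_2}; continuity of $\lambda \mapsto S_V(\lambda)$ and compactness of $S_V(\lambda)-I$ follow from item (vi) and the compactness already established for the factors $Z_0(\lambda;G)$. The main obstacle is item (ii): one must carry out operator-valued Privalov/Plemelj-Sokhotski theory uniformly on compacts of $\mathring{\Lambda}$ and then invert $I + GR_0(\lambda+i0)G^* K$ globally on $\mathring{\Lambda}$, which crucially requires the combined force of Hypotheses \ref{V-3} (compactness) and \ref{V-2} (no embedded eigenvalues of $H_V$).
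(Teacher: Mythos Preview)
Your outline matches the paper's proof closely: both rely on Yafaev's smooth-method framework, with (ii) for $R_0$ via Privalov-type arguments, (ii) for $R_V$ via the resolvent identity and Fredholm alternative, (iv)--(v) and (viii) deferred to \cite{Ya92_01}, and (vi)--(vii) derived exactly as you describe.

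There is, however, one genuine omission in your item (ii). You write that invertibility of $I + GR_0(\lambda\pm i0)G^*K$ on $\mathring{\Lambda}$ follows from ``analytic Fredholm theory together with Hypothesis~\ref{V-2}.'' But analytic Fredholm theory does not apply on the real boundary, where the map is only H\"older continuous, not analytic. What is actually needed is the identification of the exceptional set
\[
\mathcal{N} = \bigl\{\lambda \in \Lambda : I + KGR_0(\lambda\pm i0)G^* \text{ is not injective}\bigr\}
\]
with $\sigma_{\mathrm{pp}}(H_V)\cap\Lambda$, and this step \emph{requires} $s_0 > \tfrac12$ (see \cite[Lemma~4.7.2]{Ya92_01}): roughly, if $f + KGR_0(\lambda+i0)G^* f = 0$, one must show that $u := R_0(\lambda+i0)G^* f$ lies in $\mathcal{H}$, and this $L^2$-bound is where the extra half-order of H\"older regularity enters. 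Only after this identification does Hypothesis~\ref{V-2} (no embedded eigenvalues) force $\mathcal{N}=\emptyset$. The paper makes this dependence explicit, and it is precisely why Hypothesis~\ref{V-4} demands $s_0 \in (\tfrac12,1)$ rather than merely $s_0>0$; your argument as written would go through verbatim with any $s_0>0$, which is a sign that the crucial step is missing.
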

\begin{proof}
(i) is a direct consequence of the definition of strong $H_0$-smoothness together with the fact that, since $X$ is compact, \eqref{eq:H0smooth} is equivalent to
\begin{equation*}
\| G E_X(H) \| < \infty,
\end{equation*}
(see \cite{Ka65_01} or \cite[Theorem XIII.25]{ReSi79_01}).

(ii) The statements for the free resolvent ($R_\sharp = R_0$) are proven in \cite[Theorem 4.4.7]{Ya92_01}. The corresponding statements for $R_V$ are consequences of the resolvent identity
\begin{equation}\label{eq:resolvent}
GR_V(z)G^* = G R_0 (z) G^* (I+ KGR_0(z)G^*)^{-1} \ ,\ {\rm{Im}} \ z \not=0 .
\end{equation}
Indeed, since $z \mapsto G R_0 (z) G^*$ is H{\"o}lder continuous on $\{ z \in \mathbb{C} ,  \, \mathrm{Re}(z) \in \mathring{\Lambda} , \, \pm \mathrm{Im}(z) \ge 0 \}$, we deduce from \eqref{eq:resolvent} that $z \mapsto G R_V(z) G^*$ is also H{\"o}lder continuous on
\begin{equation*}
\{ z \in \mathbb{C} ,  \, \mathrm{Re}(z) \in \mathring{\Lambda} , \, \pm \mathrm{Im}(z) \ge 0 \} \setminus \mathcal{N},
\end{equation*}
where
\begin{equation}\label{eq:def_exceptional_N}
\mathcal{N} := \big \{ \lambda \in \Lambda , \, \exists f \in \mathcal{G}\setminus \{ 0 \} , \, f+KGR_0(\lambda \pm i0)G^* f = 0 \big \}.
\end{equation}
Now, it follows from Hypothesis \ref{V-4} and \cite[Lemma 4.7.2]{Ya92_01} that $\mathcal{N} = \sigma_{\mathrm{pp}}(H_V) \cap \Lambda$. Here we use the fact that the map in Hypothesis \ref{V-4} is H{\"o}lder continuous of order $s_0 > 1/2$.  Since $H_V$ does not have eigenvalues embedded in its essential spectrum according to Hypothesis \ref{V-2}, we deduce that $\mathcal{N} = \emptyset$. This proves (ii).

(iii) Given $f \in \mathcal{F}_0^* ( \mathrm{C}_0^{\infty}(\mathring{\Lambda}, \mathcal{M}))$, it follows from the Privalov theorem, (see \cite[Proposition 0.5.9]{Ya10_01}), that the map $\{ z \in \mathbb{C} ,  \, \mathrm{Re}(z) \in \mathring{\Lambda} , \, \pm \mathrm{Im}(z) \ge 0 \} \ni z \mapsto GR_0(z) f \in \mathcal{G}$ is continuous. Using the resolvent identity
\begin{equation}\label{eq:resolventegeneral}
R_V(z) = R_0(z) -R_0(z) G^* (I+ KGR_0(z)G^* )^{-1} KGR_0(z), \ {\rm{Im}} \ z \not=0,
\end{equation}
and the fact that $\mathcal{N} = \emptyset$ (where $\mathcal{N}$ is defined in \eqref{eq:def_exceptional_N}), we see that the map $z\mapsto GR_V(z)f$ possesses the same property.

(iv) Using Parseval's identity, \eqref{eq:resolventegeneral} and (ii)-(iii), it is not difficult to deduce from (i) that $G$ is $H_V$ smooth in the sense of Kato on $X$. The existence of the strong limits
\begin{align*}\label{eq:wave_unitary}
& W_\pm( H_V , H_0 , X ) := \underset{t\to \pm \infty }{\slim} E_V(X) e^{ i t H_V } e^{ - i t H_0 } E_0(X), \\
& W_{ \pm } ( H_0 , H_V , X ) := \underset{t\to \pm \infty }{\slim} E_0(X) e^{ i t H_0 } e^{ - i t H_V } E_V( X ) ,
\end{align*}
then follow from standard arguments. Proceeding as in the proof of \cite[Theorem 4.5.6]{Ya92_01} and using a density argument, one then obtains the existence and completeness of the wave operators $W_\pm( H_V , H_0 )$ and $W_{ \pm } ( H_0 , H_V )$.

(v) is proven in \cite[Theorem 5.6.1]{Ya92_01}, (see also \cite[Theorem 0.6.12]{Ya10_01}).

(vi) In view of \eqref{eq:gammapm}, the continuity of the map $\mathcal{F}_\pm G_X^* : \mathcal{G} \rightarrow \mathrm{C}^{s_0}(X; \mathcal{M})$ is a direct consequence of Hypothesis \ref{V-4} and (ii). The equation \eqref{eq:decom_Gamma_pm} follows straightforwardly from the decomposition $V=GKG^*$ (see Hypothesis \ref{V-3}) and the definition \eqref{eq:Z0} of $Z_0(\lambda;G)$.

(vii) The identity
\begin{equation*}
Z_0(\lambda;G)^* Z_0(\lambda;G) = G \Gamma_0(\lambda)^* \Gamma_0(\lambda) G^* = \frac{1}{2i\pi} G (R_0(\lambda+i0)-R_0(\lambda-i0)) G^* ,
\end{equation*}
together with (ii), shows that $Z_0(\lambda;G)^* Z_0(\lambda;G)$ is compact. Hence $Z_0(\lambda;G)$ is compact. The same holds for $Z_V^\pm(\lambda;G)$ by (vi).

(viii) The representations \eqref{eq:Kuroda}--\eqref{eq:Kuroda_2} are proven in \cite[Theorem 5.4.4']{Ya92_01} for a.e. $\lambda \in \Lambda$. Continuity of the map  $\mathring{\Lambda} \ni \lambda \mapsto S_V(\lambda) \in \mathcal{L}( \mathcal{M} )$ then follows from (ii) and the continuity of $\lambda \mapsto Z_0(\lambda,G)$. (See also \cite[Theorem 0.7.1]{Ya10_01}). Compactness of $S_V(\lambda)-I$ is a consequence of (vii).
\end{proof}

The main purpose of the following proposition is to show that, assuming Hypothesis \ref{V-5}, properties analogous to (ii)--(iii) in Proposition \ref{prop:standard_csq} hold for the resolvent of $H$ in the region $\{ z \in \mathbb{C} ,  \, \mathrm{Re}(z) \in  \mathring{\Lambda} , \, \mathrm{Im}(z) \ge 0 \}$. Observe that, by Hypothesis \ref{V-5}, the map
\begin{equation*}
\Lambda \in \lambda \mapsto Z_V^\pm(\lambda;C) = \Gamma_\pm(\lambda) C^* \in \mathcal{L}( \mathcal{H} ; \mathcal{M} ),
\end{equation*}
(see \eqref{eq:ZV}), is locally Hölder continuous of order $s$.
\begin{proposition}\label{prop:standard_csq_2}
Suppose that Hypotheses \ref{V-1}--\ref{V-5} hold. Then the following properties are satisfied:
\begin{enumerate}[label=\rm{(\roman*)},leftmargin=*]
\item The maps
\begin{align*}
& \{ z \in \mathbb{C} ,  \, \mathrm{Re}(z) \in  \mathring{\Lambda} , \, \pm \mathrm{Im}(z) \ge 0 \} \ni z \mapsto CR_V(z)C^* \in \mathcal{L}^\infty(\mathcal{H}) ,
\end{align*}
and, in particular, $\mathring{\Lambda} \ni \lambda \mapsto CR_V( \lambda \pm i 0 )C^* \in \mathcal{L}^\infty(\mathcal{H})$, are locally H{\"o}lder continuous of order $s$.
\item  For $\mathrm{Im}(z) \geq 0$, the operator $I -iCR_V(z)C^*$ is invertible in $\mathcal{L}(\mathcal{H})$ and the map
\begin{equation*}
\{ z \in \mathbb{C} , \, \mathrm{Im}(z) \ge 0 \} \in z \mapsto (I -iCR_V(z)C^*)^{-1} \in \mathcal{L}( \mathcal{H} ),
\end{equation*}
is locally H{\"o}lder continuous of order $s$.
\item The maps
\begin{align*}
& \{ z \in \mathbb{C} ,  \, \mathrm{Re}(z) \in  \mathring{\Lambda} , \, \mathrm{Im}(z) \ge 0 \} \ni z \mapsto CR(z)C^* \in \mathcal{L}^\infty(\mathcal{H}) ,
\end{align*}
and, in particular, $\mathring{\Lambda} \ni \lambda \mapsto CR( \lambda + i 0 )C^* \in \mathcal{L}^\infty(\mathcal{H})$, are locally H{\"o}lder continuous of order $s$.
\item $C$ is $H_V$-smooth, i.e., there exists a constant $\mathrm{c}> 0$, such that
\begin{align}\label{eq:HVsmooth}
\int_{ \mathbb{R} } \big \| C e^{ - i t H_V }  u \big \|_{\mathcal{H}}^2 dt \le \mathrm{c}^2 \|  u \|_{\mathcal{H}}^2 ,
\end{align}
for all $u \in \mathcal{H}_{\mathrm{ac}}(H_V)$.
\item For all $\lambda \in \Lambda$, the operators $Z_V^\pm(\lambda;C)$ are compact.
\item Let $f \in \mathcal{F}_\pm^* ( \mathrm{C}_0^{\infty}(\mathring{\Lambda}, \mathcal{M}))$. The maps
\begin{equation*}
\{ z \in \mathbb{C} ,  \, \mathrm{Re}(z) \in \mathring{\Lambda} , \, \mathrm{Im}(z) \ge 0 \} \ni z \mapsto CR_\sharp(z) f \in \mathcal{H} ,
\end{equation*}
where $R_\sharp$ stands for $R_V$ or $R$, and, in particular, $\mathring{\Lambda} \ni \lambda \mapsto CR_\sharp( \lambda + i 0 ) f \in \mathcal{H} $,
are continuous.
\end{enumerate}
\end{proposition}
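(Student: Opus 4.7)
The strategy parallels the proof of Proposition \ref{prop:standard_csq}, using the spectral representation of $H_V$ furnished by $\mathcal{F}_\pm$ together with Privalov's theorem \cite[Proposition 0.5.9]{Ya10_01} on boundary values of Cauchy--Stieltjes transforms. I would prove (i) and (v) first. By Hypothesis \ref{V-5}, the map $\mu \mapsto Z_V^\pm(\mu;C) = \Gamma_\pm(\mu) C^* \in \mathcal{L}(\mathcal{H};\mathcal{M})$ is locally H\"older continuous of order $s$; hence so is $\mu \mapsto Z_V^\pm(\mu;C)^* Z_V^\pm(\mu;C) \in \mathcal{L}(\mathcal{H})$, with uniform bound on $\mathring\Lambda$ coming from \eqref{eq:LAP_bounded}. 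Since by Hypothesis \ref{V-2} the eigenvalues of $H_V$ are finite in number and disjoint from $\Lambda$, the spectral theorem yields, for $\mathrm{Re}(z)\in\mathring\Lambda$ and $\mathrm{Im}(z)\neq 0$,
\begin{equation*}
CR_V(z)C^* = \sum_{\lambda_j\in\sigma_{\mathrm{pp}}(H_V)}\frac{C P_j C^*}{\lambda_j - z} + CR_V(z)\Pi_{\mathrm{ac}}(H_V)C^*,
\end{equation*}
where the absolutely continuous piece is the operator-valued Stieltjes integral $\int_\Lambda(\mu - z)^{-1}Z_V^\pm(\mu;C)^* Z_V^\pm(\mu;C)\, d\mu$. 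Applying Privalov in operator norm (after localization near any given point of $\mathring\Lambda$, splitting off a harmless continuous tail) gives existence and local H\"older continuity of order $s$, in norm, of the boundary values $CR_V(\lambda\pm i0)C^*$; compactness is inherited from the fact that $CR_V(z)C^*$ is compact for $\mathrm{Im}(z)\neq 0$ and $\mathcal{L}^\infty(\mathcal{H})$ is norm-closed. Part (v) then follows from the identity
\begin{equation*}
Z_V^\pm(\lambda;C)^* Z_V^\pm(\lambda;C) = \tfrac{1}{2i\pi}\, C\bigl(R_V(\lambda+i0) - R_V(\lambda-i0)\bigr)C^*,
\end{equation*}
combined with the elementary equivalence $T^*T \in \mathcal{L}^\infty \Longleftrightarrow T \in \mathcal{L}^\infty$.

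For (ii), note that $\mathrm{Im}(R_V(z))\ge 0$ for $\mathrm{Im}(z)\ge 0$, so $A(z) := CR_V(z)C^*$ satisfies $\mathrm{Im}(A(z))\ge 0$. The bound
\begin{equation*}
\mathrm{Re}\langle u,(I - iA(z))u\rangle_{\mathcal{H}} = \|u\|_{\mathcal{H}}^2 + \mathrm{Im}\langle u, A(z)u\rangle_{\mathcal{H}} \ge \|u\|_{\mathcal{H}}^2 ,
\end{equation*}
and the analogous one for $(I - iA(z))^* = I + iA(z)^*$ (whose imaginary part is non-positive) yield $\|(I - iA(z))u\|\ge \|u\|$ and $\|(I - iA(z))^* u\|\ge \|u\|$, hence bijectivity with $\|(I - iA(z))^{-1}\|\le 1$. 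Local H\"older continuity of the inverse follows from (i) via $B^{-1} - A^{-1} = B^{-1}(A - B)A^{-1}$. For (iii), the resolvent identity $R(z) = R_V(z) + iR(z)C^*CR_V(z)$ valid on $\{\mathrm{Im}(z) > 0\} \subset \rho(H)$ rearranges to
\begin{equation*}
CR(z)C^* = CR_V(z)C^* \bigl( I - iCR_V(z)C^* \bigr)^{-1} ,
\end{equation*}
and local H\"older continuity up to $\{\mathrm{Im}(z) = 0,\mathrm{Re}(z)\in\mathring\Lambda\}$ combines (i) and (ii); compactness survives since it is preserved by composition with a bounded operator.

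For (iv) I would use the spectral diagonalisation directly: for $u \in \mathcal{H}_{\mathrm{ac}}(H_V)$, Parseval in $t$ (together with Fubini) gives
\begin{equation*}
\int_{\mathbb{R}} \|Ce^{-itH_V}u\|_{\mathcal{H}}^2\, dt = 2\pi \int_\Lambda \|Z_V^\pm(\mu;C)^*(\mathcal{F}_\pm u)(\mu)\|_{\mathcal{H}}^2\, d\mu \le 2\pi \Bigl(\sup_{\mu\in\mathring\Lambda}\|Z_V^\pm(\mu;C)\|^2\Bigr)\|u\|_{\mathcal{H}}^2 ,
\end{equation*}
and the supremum is finite by the identity used in (v) together with the $\mathcal{L}^\infty$ bound \eqref{eq:LAP_bounded}. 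Finally, (vi) proceeds as in Proposition \ref{prop:standard_csq}(iii): for $f \in \mathcal{F}_\pm^*(\mathrm{C}_0^\infty(\mathring\Lambda,\mathcal{M}))$, writing
\begin{equation*}
CR_V(z)f = \int_\Lambda \frac{Z_V^\pm(\mu;C)^*(\mathcal{F}_\pm f)(\mu)}{\mu - z}\, d\mu ,
\end{equation*}
with the integrand compactly supported in $\mathring\Lambda$ and H\"older continuous, Privalov gives continuity of $z \mapsto CR_V(z)f$ on $\{\mathrm{Re}(z)\in\mathring\Lambda,\,\mathrm{Im}(z)\ge 0\}$; the continuity of $CR(z)f = CR_V(z)f + i CR(z)C^*\cdot CR_V(z)f$ then follows by combining with (iii).

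\textbf{Main obstacle.} The delicate step is the operator-norm version of Privalov in (i): one must check that the norm-H\"older continuity of the spectral density $\mu\mapsto Z_V^\pm(\mu;C)^*Z_V^\pm(\mu;C)$ on a compact neighbourhood of any $\lambda_0 \in \mathring\Lambda$ propagates through the singular kernel $(\mu - z)^{-1}$ to give norm-continuous boundary values of the Stieltjes transform (handled by separating a local principal-value term from a residue term), while the complementary tail is controlled via the uniform $\mathcal{L}^\infty$ bound of Hypothesis \ref{V-5} and the fact that eigenvalues of $H_V$ do not accumulate in $\mathring\Lambda$. Once (i) is secured, parts (ii)--(vi) are essentially mechanical.
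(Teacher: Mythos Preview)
Your proposal is correct and follows essentially the same route as the paper: for (i) the paper simply cites \cite[Theorem 4.4.7]{Ya92_01}, whose content is precisely the operator-valued Privalov argument you sketch; (ii), (iii), (v), and (vi) match the paper's arguments almost verbatim (the paper uses the same coercivity $2\,\mathrm{Re}(I-iCR_V(z)C^*)\ge 2I$ for (ii), invoking Fredholm rather than your dual lower bound, and the same resolvent identity for (iii) and (vi)). The only notable variation is (iv), where the paper appeals to the Kato--Reed--Simon resolvent criterion \cite[Theorem XIII.25]{ReSi79_01} and the bound \eqref{eq:LAP_bounded}, while your direct Parseval computation via the diagonalisation $\mathcal{F}_\pm$ is an equally standard and slightly more self-contained alternative.
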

\begin{proof}
(i) In the same way as in the proof of Proposition \ref{prop:standard_csq}, (i) is proven in \cite[Theorem 4.4.7]{Ya92_01}

(ii) Suppose that $z \in \mathbb{C}$ with $\mathrm{Im}(z)>0$. According to Hypothesis \ref{V-5}, $CR_V(z)C^*$ is compact. Hence, by the Fredholm alternative,
it suffices to prove that $A(z):=I -iCR_V(z)C^*$ is injective. We compute
\begin{align*}\label{eq:injective}
  2 \, {\rm{Re}} \,  A(z) &= A(z) + A(z)^* \\
                      &=2I - iC (R_V (z) - R_V (\bar{z})) C^*  \\
                      &= 2I + 2 \ {\rm{Im}}( z ) \ C R_V (z) R_V (\bar{z}) C^* \geq 2I,
\end{align*}
in the sense of the operators. A continuity argument then implies that $ 2 \, {\rm{Re}} \,  A(z) \geq 2I$ for any $z \in \mathbb{C}$ such that $\mathrm{Im}(z) \geq 0$. This proves that $A(z)$ is invertible. The H{\"o}lder continuity of $z \mapsto A(z)^{-1}$ then follows easily from (i).

(iii) By (ii) and the the resolvent identity, we have that
\begin{equation}\label{eq:resolvent1}
CR(z)C^* = C R_V (z) C^* (I-iCR_V(z)C^*)^{-1} ,\quad {\rm{Im}} (z) >0.
\end{equation}
Since $C R_V (z) C^*$ is compact by Hypothesis \ref{V-5}, this implies that $C R (z) C^*$ is also compact. Moreover, using (i) and again (ii), we see that $z \mapsto CR(z)C^*$ is Hölder continuous of order $s$ on $\{ z \in \mathbb{C} ,  \, \mathrm{Re}(z) \in  \mathring{\Lambda} , \, \mathrm{Im}(z) \ge 0 \}$.

(iv) By \cite[Theorem XIII.25]{ReSi79_01}, it suffices to show that the map
\begin{equation*}
\{ z \in \mathbb{C} , \, \mathrm{Im}(z) \neq 0 \} \ni z \mapsto C \big ( R_V( z ) - R_V( \bar{z} ) \big ) \Pi_{\mathrm{ac}}(H_V) C^* \in \mathcal{L}(\mathcal{H})
\end{equation*}
is bounded. Since, by Hypothesis \ref{V-2}, $\sigma_{\mathrm{ac}}(H_V)=\Lambda$, and since $H_V$ has at most finitely many eigenvalues in $\mathbb{R} \setminus \Lambda$, it suffices in turn to verify that
\begin{equation*}
\{ z \in \mathbb{C} , \, \mathrm{Re}(z) \in \Lambda, \, \mathrm{Im}(z) \neq 0 \} \ni z \mapsto C \big ( R_V( z ) - R_V( \bar{z} ) \big ) C^* \in \mathcal{L}(\mathcal{H})
\end{equation*}
is bounded. This follows from the facts that the limits $C R_V( \lambda \pm i 0 ) C^* = \lim_{\varepsilon \downarrow 0} C R_V( \lambda \pm i \varepsilon ) C^*$ exist for all $\lambda \in \mathring{\Lambda}$, by (i), and that
\begin{equation*}
\mathring{\Lambda} \in \lambda \mapsto C \big ( R_V( \lambda + i 0 ) - R_V( \lambda - i 0 ) \big ) C^* \in \mathcal{L}( \mathcal{H} ) ,
\end{equation*}
is bounded by Hypothesis \ref{V-5}.

(v) It suffices to use the same argument as in the proof of Proposition \ref{prop:standard_csq} (vii), using (iv).

(vi) Let $f \in \mathcal{F}_{\pm}^* \ ( \mathrm{C}_0^{\infty}(\mathring{\Lambda}, \mathcal{M}))$. As in the proof or Proposition \ref{prop:standard_csq} (iii), the map $z \mapsto CR_V(z) f$ is continuous on $\{ z \in \mathbb{C} ,  \, \mathrm{Re}(z) \in \mathring{\Lambda} , \, \mathrm{Im}(z) \ge 0 \}$. Using (ii), (iii) and the resolvent identity
\begin{equation}\label{eq:resolgen}
R(z) = R_V(z) +iR_V(z) C^* (I-iCR_V(z)C^* )^{-1} CR_V(z), \quad  {\rm{Im}}(z) \geq0,
\end{equation}
we see that $z \mapsto CR(z)f$ is also continuous on  $\{ z \in \mathbb{C} ,  \, \mathrm{Re}(z) \in \mathring{\Lambda} , \, \mathrm{Im}(z) \ge 0 \}$.
\end{proof}

To conclude this section, we mention that the following resolvent formula holds under Hypothesis \ref{V-1}--\ref{V-5}:
\begin{equation}\label{eq:PALH}
I+iCR(\lambda+i0)C^* =  (I-iCR_V(\lambda+i0)C^*)^{-1} ,\quad \lambda \in \mathring{\Lambda}.
\end{equation}
Equation \eqref{eq:PALH} will be used several times in Section \ref{sec:proof}.

\subsection{Dissipative scattering theory.}\label{subsec:diss}

In this section, we recall some of the results obtained in \cite{FaFr18_01} (see also \cite{Da78_01,Da80_01,Ma75_01}) that will be used throughout Section \ref{sec:proof}. Note that these results are actually proven under slightly weaker assumptions in \cite{FaFr18_01}.
For instance, in \cite{FaFr18_01}, it is not supposed that $G$ is strongly $H_0$-smooth, only that the wave operators $W_{\pm}(H_V, H_0)$ exist and are asymptotically complete. Likewise, in \cite{FaFr18_01},
the operator $C$ is only assumed to be $H_V$-smooth in the usual sense of Kato, while we assume here that $C$ is strongly $H_V$-smooth.

As mentioned before, the wave operators $W_-( H , H_0 )$ and $W_+( H^* , H_0 )$ are defined by
\begin{equation*}
W_-( H , H_0 ) = \underset{t\to \infty }{\slim}  \, e^{ - i t H } e^{ i t H_0 }, \quad W_+( H^* , H_0 ) = \underset{t\to \infty }{\slim}  \, e^{ i t H^* } e^{ - i t H_0 },
\end{equation*}
provided that the strong limits exist. Following \cite{Da78_01,Da80_01}, one can define the {\it{absolutely continuous subspace}} for the dissipative operator $H$ by setting
\begin{equation*}
\mathcal{H}_{ \mathrm{ac} }( H ) := \overline{ M ( H ) },
\end{equation*}
where
\begin{equation*}
 M( H ) := \Big \{ u \in \mathcal{H} , \exists \mathrm{c}_u > 0 ,  \forall v \in \mathcal{H} , \int_0^\infty \big |\langle e^{ - i t H }u , v \rangle \big |^2 dt \le \mathrm{c}_u \| v \|^2 \Big \} .
\end{equation*}
Note that when $H$ is self-adjoint, this definition coincides with the usual one  based on the nature of the spectral measures of $H$. The absolutely continuous subspace for $H^*$ is defined in the same way, replacing $e^{ - i t H }$ by $e^{ i t H^* }$ in the equation above. It is proven in \cite{Da80_01} that, under assumptions weaker than Hypotheses \ref{V-1}--\ref{V-5},
\begin{equation*}
\mathcal{H}_{ \mathrm{ac} }( H ) = \mathcal{H}_{ \mathrm{b} }( H )^\perp ,
\end{equation*}
and likewise for $\mathcal{H}_{ \mathrm{ac} }( H^* )$, where $\mathcal{H}_{ \mathrm{b} }( H )$, the space of bound states for $H$, is defined in \eqref{eq:defHb}. Since $\mathcal{H}_{ \mathrm{b} }( H )=\mathcal{H}_{ \mathrm{b} }( H^* )$ (see \cite[Lemma 1]{Da80_01} or \cite[Lemma 3.1]{FaFr18_01}), it follows that $\mathcal{H}_{ \mathrm{ac} }( H ) = \mathcal{H}_{ \mathrm{ac} }( H^* )$. We also recall that the dissipative spaces $\mathcal{H}_{ \mathrm{d} }( H )$ and $\mathcal{H}_{ \mathrm{d} }( H^* )$ are defined in \eqref{eq:defHd}--\eqref{eq:defH*d}.

The results of Section \ref{subsec:smooth} show that Hypotheses \ref{V-1}--\ref{V-5} of the present paper imply Hypotheses 2.1, 2.3 and 2.4 made in \cite{FaFr18_01}. Therefore, \cite[Proposition 3.4]{FaFr18_01} (see also \cite{Da80_01}) implies the following result.
\begin{proposition}\label{prop:existence_W-}
Suppose that Hypotheses \ref{V-1}--\ref{V-5} hold. The wave operators $W_-( H , H_0 )$ and $W_+(H^*,H_0)$ exist, are injective contractions, and  satisfy
the intertwining properties
  \begin{align}
 & H W_-( H , H_0)  = W_-( H , H_0 ) H_0 \quad {\text{on}} \quad     \mathcal{D}(H_0), \label{eq:inter-2} \\
 & H^* W_+( H^* , H_0)  = W_+( H^* , H_0 ) H_0 \quad  {\text{on}} \quad  \mathcal{D}(H_0).  \label{eq:inter-21}
  \end{align}
Moreover,
  \begin{align*}
& \overline{\mathrm{Ran}( W_-( H , H_0 ) ) } = \big ( \mathcal{H}_{ \mathrm{b} }( H ) \oplus \mathcal{H}_{ \mathrm{d} }( H^* ) \big )^\perp \subset M(H) \subset \mathcal{H}_{ \mathrm{ac} }( H ) , \\
& \overline{\mathrm{Ran}( W_+( H^* , H_0 ) ) } = \big ( \mathcal{H}_{ \mathrm{b} }( H ) \oplus \mathcal{H}_{ \mathrm{d} }( H ) \big )^\perp \subset M(H^*) \subset \mathcal{H}_{ \mathrm{ac} }( H ).
\end{align*}
\end{proposition}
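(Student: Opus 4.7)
The plan is to reduce the dissipative wave operators to the unitary wave operators $W_{\pm}(H_V, H_0)$ (existing by Proposition~\ref{prop:standard_csq}(iv)) through a chain rule going via $H_V$, and then to extract the range statements from the stationary estimates of Proposition~\ref{prop:standard_csq_2}. My first step will be to establish the intermediate dissipative wave operators
\begin{equation*}
W_-(H, H_V) := \underset{t\to\infty}{\slim}\, e^{-itH}e^{itH_V}, \qquad W_+(H^*, H_V) := \underset{t\to\infty}{\slim}\, e^{itH^*}e^{-itH_V},
\end{equation*}
on $\mathcal{H}_{\mathrm{ac}}(H_V)$ by Cook's method. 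For the first, $\tfrac{d}{dt}(e^{-itH}e^{itH_V}v) = -e^{-itH}C^*Ce^{itH_V}v$; testing against $w \in \mathcal{H}$ and rewriting the pairing as $\langle Ce^{itH_V}v, Ce^{itH^*}w\rangle$, Cauchy--Schwarz reduces strong convergence to the two Kato-type bounds
\begin{equation*}
\int_0^\infty \|Ce^{itH_V}v\|^2\, dt \leq c\|v\|^2 \quad (v \in \mathcal{H}_{\mathrm{ac}}(H_V)), \qquad \int_0^\infty \|Ce^{itH^*}w\|^2\, dt \leq \tfrac{1}{2}\|w\|^2.
\end{equation*}
The first is Proposition~\ref{prop:standard_csq_2}(iv); the second is the integrated dissipative energy identity $\tfrac{d}{dt}\|e^{itH^*}w\|^2 = -2\|Ce^{itH^*}w\|^2$, which is immediate from $H^* = H_V + iC^*C$. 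The symmetric Cook argument furnishes $W_+(H^*, H_V)$. The factorisation $e^{-itH}e^{itH_0} = (e^{-itH}e^{itH_V})(e^{-itH_V}e^{itH_0})$, together with $\slim_{t\to\infty} e^{-itH_V}e^{itH_0} = W_-(H_V, H_0)$ (whose range lies in $\mathcal{H}_{\mathrm{ac}}(H_V)$) and the uniform contraction $\|e^{-itH}e^{itH_V}\|\leq 1$, then yields $W_-(H, H_0) = W_-(H, H_V)W_-(H_V, H_0)$, and analogously $W_+(H^*, H_0) = W_+(H^*, H_V)W_+(H_V, H_0)$.

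Given existence, the contraction property is immediate from $\|e^{-itH}\|\leq 1$ and $\|e^{itH^*}\|\leq 1$ for $t\geq 0$. The intertwining relations \eqref{eq:inter-2}--\eqref{eq:inter-21} follow by passing to the strong limit in $e^{-i(t+s)H}e^{i(t+s)H_0} = e^{-isH}(e^{-itH}e^{itH_0})e^{isH_0}$, which gives $e^{-isH}W_-(H, H_0) = W_-(H, H_0)e^{-isH_0}$ for every $s \in \mathbb{R}$, and then differentiating at $s = 0$ on $\mathcal{D}(H_0)$; the same argument handles $W_+(H^*, H_0)$.

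The easy half of the range characterisation consists in checking that $\mathrm{Ran}(W_-(H, H_0))$ is orthogonal to both $\mathcal{H}_{\mathrm{b}}(H)$ and $\mathcal{H}_{\mathrm{d}}(H^*)$. If $Hu = \lambda u$, then $\mathrm{Im}\langle u, Hu\rangle = -\|Cu\|^2 = 0$ (using $\lambda \in \mathbb{R}$), so $Cu = 0$ and $H^*u = \lambda u$; hence
\begin{equation*}
\langle u, W_-(H, H_0)w\rangle = \lim_{t\to\infty} e^{-it\lambda}\langle u, e^{itH_0}w\rangle = 0
\end{equation*}
by the Riemann--Lebesgue lemma applied to the purely absolutely continuous operator $H_0$ (Hypothesis~\ref{V-1}). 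If $u \in \mathcal{H}_{\mathrm{d}}(H^*)$, then $|\langle u, W_-(H, H_0)w\rangle| \leq \lim_t \|e^{itH^*}u\|\,\|w\| = 0$ directly. The orthogonality for $\mathrm{Ran}(W_+(H^*, H_0))$ is obtained by swapping the roles of $H$ and $H^*$.

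The main obstacle is the matching reverse inclusion $(\mathcal{H}_{\mathrm{b}}(H) \oplus \mathcal{H}_{\mathrm{d}}(H^*))^\perp \subset \mathrm{Ran}(W_-(H, H_0)) \subset M(H)$, which simultaneously encodes injectivity of $W_-(H, H_0)$ and the inclusion of its range into the absolutely continuous subspace. The natural approach is to give a stationary representation of $W_-(H, H_0)^*$ through the boundary values $CR(\lambda - i0)C^*$ and $CR_V(\lambda + i 0)C^*$ provided by Proposition~\ref{prop:standard_csq_2}(i)--(iii), and then to study the asymptotic behaviour of $e^{itH^*}u$ for $u$ in that orthogonal complement: Hypothesis~\ref{V-2}, the Kato smoothness of $C$, and the dissipative energy identities decompose any such $u$ into a scattering image under $W_-(H, H_0)$ plus a purely dissipative $H^*$-part, the latter vanishing by assumption, while the same representation quantifies $\int_0^\infty |\langle e^{-itH}u, v\rangle|^2\, dt \leq c_u\|v\|^2$ for every $v \in \mathcal{H}$. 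Since Sections~\ref{subsec:smooth}--\ref{subsec:diss} are precisely devoted to verifying that our Hypotheses~\ref{V-1}--\ref{V-5} imply the weaker assumptions of \cite{FaFr18_01}, I would conclude by invoking \cite[Proposition~3.4]{FaFr18_01}.
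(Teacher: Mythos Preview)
Your proposal is correct and ultimately coincides with the paper's approach: the paper proves this proposition entirely by citation, observing that Hypotheses~\ref{V-1}--\ref{V-5} imply the abstract hypotheses of \cite{FaFr18_01} (via the results of Section~\ref{subsec:smooth}) and then invoking \cite[Proposition~3.4]{FaFr18_01}. You do the same for the hard parts (injectivity, the reverse range inclusion, and $\mathrm{Ran}(W_-)\subset M(H)$), but you additionally spell out the elementary steps---Cook's argument via the chain rule through $H_V$, the intertwining relations, and the easy orthogonality $\mathrm{Ran}(W_-)\perp\mathcal{H}_{\mathrm{b}}(H)\oplus\mathcal{H}_{\mathrm{d}}(H^*)$---that the paper leaves implicit in the citation.
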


The  wave operators $W_+( H_0 , H )$ and $W_-( H_0 , H^* )$ are defined by
\begin{align*}
& W_+( H_0 , H ) := \underset{t\to \infty }{\slim}  \, e^{ i t H_0 } e^{ - i t H } \Pi_{ \mathrm{ac} }( H ), \\
& W_-( H_0 , H^* ) := \underset{t\to \infty }{\slim}  \, e^{ - i t H_0 } e^{ i t H^* } \Pi_{ \mathrm{ac} }( H ),
\end{align*}
provided that the strong limits exist, where $\Pi_{ \mathrm{ac} }( H )$ is the orthogonal projections onto $\mathcal{H}_{ \mathrm{ac} }( H )$. The following result is proven in \cite[Proposition 3.6]{FaFr18_01}.
\begin{proposition}\label{prop:existence_W+}
Suppose that Hypotheses \ref{V-1}--\ref{V-5} hold. The wave operators $W_+( H_0 , H )$ and $W_-( H_0 , H^* )$ exist, are contractions with dense ranges, and their kernels are given by
\begin{align*}
& \mathrm{Ker}( W_+( H_0 , H ) ) = \mathcal{H}_{ \mathrm{b} }( H ) \oplus \mathcal{H}_{ \mathrm{d} }( H ) , \\
& \mathrm{Ker}( W_-( H_0 , H^* ) ) = \mathcal{H}_{ \mathrm{b} }( H ) \oplus \mathcal{H}_{ \mathrm{d} }( H^* ).
\end{align*}
Moreover,
\begin{align*}
W_+^*( H_0 , H ) = W_+( H^* , H_0 ), \quad W_-^*( H_0 , H^* ) = W_-( H , H_0 ) ,
\end{align*}
and we have the intertwining properties
\begin{align}
& H_0 W_+( H_0 , H)  = W_+( H_0 , H ) H  \quad {\text{on}} \quad  \mathcal{D}(H) , \label{eq:inter-4} \\
& H_0 W_-( H_0 , H^*)  = W_-( H_0 , H^* ) H^*  \quad {\text{on}} \quad  \mathcal{D}(H). \label{eq:inter-41}
\end{align}
\end{proposition}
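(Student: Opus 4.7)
The plan is to derive this proposition from Proposition~\ref{prop:existence_W-} by a duality argument: the key observation is that $W_+(H_0,H)$ and $W_+(H^*,H_0)$ should turn out to be mutually adjoint, and once this is established the remaining assertions follow almost immediately from Proposition~\ref{prop:existence_W-}.

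For the existence of the strong limit defining $W_+(H_0,H)$, I would use a Cook-type argument based on
\[ e^{itH_0}e^{-itH}u - e^{isH_0}e^{-isH}u = -\int_s^t e^{i\tau H_0}(iV + C^*C)e^{-i\tau H}u \, d\tau. \]
The factorization $V = G^*KG$ of Hypothesis~\ref{V-3} together with Cauchy--Schwarz reduces matters to $L^2$ integrability of $Ge^{-i\tau H}u$ and $Ce^{-i\tau H}u$ on $(0,\infty)$, as well as their dual counterparts against $e^{-i\tau H_0}$. The $H_0$-side is provided by Proposition~\ref{prop:standard_csq}(i) and Proposition~\ref{prop:standard_csq_2}(iv). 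On the $H$-side, the bound for $C$ follows from the dissipative identity $\tfrac{d}{dt}\|e^{-itH}u\|^2 = -2\|Ce^{-itH}u\|^2$, yielding $\int_0^\infty\|Ce^{-itH}u\|^2 dt\le\tfrac12\|u\|^2$; the bound for $G$ is obtained by perturbing the $H_V$-smoothness of $G$ against the bounded operator $-iC^*C$ via a Duhamel argument. Existence of $W_-(H_0,H^*)$ is handled symmetrically.

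For the adjoint relation, passing to the limit in the trivial identity $\langle e^{itH_0}e^{-itH}\Pi_{\mathrm{ac}}(H)u, v\rangle = \langle\Pi_{\mathrm{ac}}(H)u, e^{itH^*}e^{-itH_0}v\rangle$ using Proposition~\ref{prop:existence_W-}, together with the inclusion $\overline{\mathrm{Ran}(W_+(H^*,H_0))}\subset\mathcal{H}_{\mathrm{ac}}(H)$ (which makes $\Pi_{\mathrm{ac}}(H)$ redundant), yields $W_+(H_0,H)^* = W_+(H^*,H_0)$. The kernel identification is then $\mathrm{Ker}(W_+(H_0,H)) = \mathrm{Ran}(W_+(H^*,H_0))^\perp = \mathcal{H}_{\mathrm{b}}(H)\oplus\mathcal{H}_{\mathrm{d}}(H)$, directly from Proposition~\ref{prop:existence_W-} (the direct sum is legitimate because a real eigenvalue preserves the norm of its eigenvector under $e^{-itH}$, so $\mathcal{H}_{\mathrm{b}}(H)\cap\mathcal{H}_{\mathrm{d}}(H)=\{0\}$). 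Density of the range of $W_+(H_0,H)$ is equivalent to triviality of the kernel of its adjoint $W_+(H^*,H_0)$, which holds by Proposition~\ref{prop:existence_W-}; contractivity is trivial. The intertwining \eqref{eq:inter-4} follows by differentiating the semigroup identity $e^{-isH_0}W_+(H_0,H) = W_+(H_0,H)e^{-isH}$ at $s=0$ on $\mathcal{D}(H)$, and \eqref{eq:inter-41} is treated symmetrically.

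The principal technical obstacle is the Cook estimate above — specifically, obtaining an $L^2$ smoothness bound for $G e^{-itH}u$ with respect to the non-self-adjoint semigroup $e^{-itH}$, for which no direct analog of Kato's spectral criterion applies; iterating a Duhamel expansion against the easy $C$-smoothness bound, and invoking the $H_V$-smoothness from Hypothesis~\ref{V-4}, should close this step but is the most delicate piece.
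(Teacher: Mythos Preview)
The paper does not give its own proof of this proposition; it simply cites \cite[Proposition~3.6]{FaFr18_01}. Your overall architecture --- establish existence of the strong limits, then read off the adjoint identities, kernels, dense ranges, and intertwining from Proposition~\ref{prop:existence_W-} by duality --- is exactly the standard approach and is correct.

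The genuine gap is in your existence step. A direct Cook argument between $H_0$ and $H$ requires more than the hypotheses deliver. First, $G$ is only \emph{locally} $H_0$-smooth (Proposition~\ref{prop:standard_csq}(i)), so $\int_0^\infty\|Ge^{-i\tau H_0}v\|^2\,d\tau$ is controlled only for $v\in E_0(X)\mathcal{H}$ with $X\Subset\Lambda$; likewise $G$ is only locally $H_V$-smooth, which is what your Duhamel iteration for $Ge^{-itH}u$ would need to get started. Second, $C$ is assumed strongly $H_V$-smooth (Hypothesis~\ref{V-5}), not $H_0$-smooth, so the term $\int_0^\infty\|Ce^{-i\tau H_0}v\|^2\,d\tau$ is not immediately available either. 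The ``most delicate piece'' you flag is therefore not closed by the argument sketched, and there are additional holes on the $H_0$ side.

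The clean fix --- and, given the remark in Section~\ref{subsec:diss} that \cite{FaFr18_01} assumes only existence and completeness of $W_\pm(H_V,H_0)$ rather than any smoothness of $G$, presumably the route taken there --- is to factor through $H_V$. One first shows that $W_+(H_V,H):=\slim_{t\to\infty}e^{itH_V}e^{-itH}$ exists: the Cook identity here is $\tfrac{d}{d\tau}(e^{i\tau H_V}e^{-i\tau H})=-e^{i\tau H_V}C^*Ce^{-i\tau H}$, and Cauchy--Schwarz closes using only the \emph{global} $H_V$-smoothness of $C$ (Proposition~\ref{prop:standard_csq_2}(iv)) and the dissipative bound $\int_0^\infty\|Ce^{-i\tau H}u\|^2\,d\tau\le\tfrac12\|u\|^2$. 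Composing with $W_+(H_0,H_V)$ from Proposition~\ref{prop:standard_csq}(iv) then gives $W_+(H_0,H)$. This bypasses $G$ on the dissipative side entirely, so the obstacle you identified never arises.
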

By Propositions \ref{prop:existence_W-} and \ref{prop:existence_W+}, the scattering operators $S(H,H_0)$ in \eqref{eq:def_scatt_op} is well-defined and satisfies
\begin{equation}
S( H , H_0 ) = W_+( H_0 , H ) W_-( H , H_0 ). \label{eq:defscattop}
\end{equation}
Moreover, $S(H,H_0)$ is a contraction and commutes with $H_0$. Its adjoint is given by
\begin{equation*}
S( H , H_0 )^* = S( H^* , H_0 ).
\end{equation*}

We conclude this section by recalling one of the main results of \cite{FaFr18_01} which should be compared to Theorem \ref{thm:invert_S} of the present paper. As in \cite{Da80_01,FaFr18_01}, we set
\begin{align}
& \mathcal{H}_{ \mathrm{p} }( H ) := \mathrm{Span} \, \big \{ u \in \mathrm{Ran}( \Pi_\lambda ) , \, \lambda \in \sigma( H ) , \, \mathrm{Im} \, \lambda < 0  \big \} , \label{eq:defHp} \\
& \mathcal{H}_{ \mathrm{p} }( H^* ) := \mathrm{Span} \, \big \{ u \in \mathrm{Ran}( \Pi^*_\lambda ) , \, \lambda \in \sigma( H^* ) , \, \mathrm{Im} \, \lambda > 0  \big \} ,\label{eq:defH^*p}
\end{align}
where $\Pi_\lambda$, respectively $\Pi^*_\lambda$, denotes the Riesz projection corresponding to an isolated eigenvalue of $H$, respectively of $H^*$. The following result is established in \cite{FaFr18_01}.
\begin{theorem}\label{thm:mainFF}
Assume that Hypotheses \ref{V-1}--\ref{V-5} hold. Suppose that $H$ has finitely many eigenvalues, no spectral singularities in $\Lambda$, and that there exists $m > 0$ such that
\begin{equation*}
\sup_{\mu \ge m , \, \varepsilon > 0 } \big \| C \big ( H - ( \mu - i \varepsilon ) \big )^{-1} C^* \big \| < \infty.
\end{equation*}
Then $\mathcal{H}_{ \mathrm{p} }( H ) = \mathcal{H}_{ \mathrm{d} }( H )$, $\mathcal{H}_{ \mathrm{p} }( H^* ) = \mathcal{H}_{ \mathrm{d} }( H^* )$, and
\begin{align}\label{eq:RanW-FF}
&Ê\mathrm{Ran}( W_-( H , H_0 ) ) = \big ( \mathcal{H}_{ \mathrm{b} }( H ) \oplus \mathcal{H}_{ \mathrm{p} }( H^* ) \big )^\perp.
\end{align}
Moreover, $W_+( H_0 , H )$ is surjective and $S( H, H_0 )$ is bijective.
\end{theorem}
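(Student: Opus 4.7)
The plan is to construct a direct-sum decomposition $\mathcal{H} = \mathcal{H}_{\mathrm{b}}(H) \oplus \mathcal{H}_{\mathrm{p}}(H) \oplus \mathrm{Ran}(W_-(H,H_0))$ from which all three conclusions of the theorem will follow; the symmetric decomposition for $H^*$ yields $\mathcal{H}_{\mathrm{d}}(H^*) = \mathcal{H}_{\mathrm{p}}(H^*)$. The easy half, $\mathcal{H}_{\mathrm{p}}(H) \subset \mathcal{H}_{\mathrm{d}}(H)$, is immediate from Jordan form: on a generalized eigenspace at $\lambda$ with $\mathrm{Im}(\lambda) < 0$, the propagator $e^{-itH}$ is $e^{-it\lambda}$ times a polynomial in $t$, and $|e^{-it\lambda}| = e^{t\mathrm{Im}(\lambda)} \to 0$ as $t \to \infty$.

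The core step is to build a Riesz-type projector complementary to $\Pi_{\mathrm{b}} + \Pi_{\mathrm{p}}$ and identify its range with $\mathrm{Ran}(W_-(H,H_0))$. Since $H$ has finitely many eigenvalues, the Riesz projections $\Pi_{\mathrm{b}}$ and $\Pi_{\mathrm{p}}$ onto $\mathcal{H}_{\mathrm{b}}(H)$ and $\mathcal{H}_{\mathrm{p}}(H)$ are well defined via contour integrals of $(z-H)^{-1}$, and I set $\Pi_{\mathrm{ac}} := I - \Pi_{\mathrm{b}} - \Pi_{\mathrm{p}}$. The goal is to establish a Dunford-type representation
\begin{equation*}
\Pi_{\mathrm{ac}} u = \frac{1}{2\pi i} \lim_{R \to \infty} \int_{\Gamma_R} (z - H)^{-1} u \, dz,
\end{equation*}
where $\Gamma_R$ runs slightly above $\mathbb{R}$ from $-R$ to $R$ and closes through a large semicircle in $\{\mathrm{Im}(z) < 0\}$ (encircling all eigenvalues of $H$). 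Matching this contour with one running slightly below $\mathbb{R}$ and using the identity \eqref{eq:PALH} together with the stationary formula expressing $W_-(H,H_0)$ via the boundary values of $R_V(\lambda \pm i0)$ and $R(\lambda + i0)$, one identifies $\mathrm{Ran}(\Pi_{\mathrm{ac}})$ with $\mathrm{Ran}(W_-(H,H_0))$; in particular, the latter is closed. The missing direction $\mathcal{H}_{\mathrm{d}}(H) \subset \mathcal{H}_{\mathrm{p}}(H)$ then follows by decomposing any $u \in \mathcal{H}_{\mathrm{d}}(H)$ into its three components: the bound-state part does not decay, and the scattering part $W_-(H,H_0)v$ satisfies $e^{-itH}W_-(H,H_0)v = W_-(H,H_0)e^{-itH_0}v$ by \eqref{eq:inter-2}, so injectivity of $W_-(H,H_0)$ (Proposition \ref{prop:existence_W-}) combined with local energy decay of $e^{-itH_0}$ along $C^*\mathcal{H}$ (from the $H_0$-smoothness consequences of Proposition \ref{prop:standard_csq}) rules out $\|e^{-itH}W_-(H,H_0)v\| \to 0$ unless $v = 0$. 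Formula \eqref{eq:RanW-FF} is then obtained from Proposition \ref{prop:existence_W-} combined with the identification $\mathcal{H}_{\mathrm{d}}(H^*) = \mathcal{H}_{\mathrm{p}}(H^*)$, while bijectivity of $S(H,H_0)$ and surjectivity of $W_+(H_0,H)$ follow from closedness of the relevant ranges together with Proposition \ref{prop:existence_W+}.

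The hard part will be rigorously justifying the contour deformation, because the only quantitative control available on $(z-H)^{-1}$ is through sandwiched resolvents: Hypothesis \ref{V-5} and Proposition \ref{prop:standard_csq_2} yield H\"older continuity of $C R_V(\lambda \pm i0) C^*$ and $C R(\lambda + i0) C^*$, but not of the full resolvent. I would therefore work with matrix elements $\langle f, (z-H)^{-1} g \rangle$ for $f, g$ in a suitable dense subspace and expand via
\begin{equation*}
(z-H)^{-1} = (z-H_V)^{-1} + i (z-H_V)^{-1} C^* \bigl( I - i C (z-H_V)^{-1} C^* \bigr)^{-1} C (z-H_V)^{-1},
\end{equation*}
reducing every estimate to quantities controlled by the hypotheses. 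The absence of spectral singularities in $\Lambda$ guarantees that the boundary values $C R(\lambda - i0) C^*$ exist uniformly on compact subsets of $\mathring{\Lambda}$ (cf.\ Definition \ref{def:spec-sing}), so the contour can be pushed down to $\mathbb{R}$ without encountering obstructions, while the uniform bound $\sup_{\mu \ge m, \varepsilon > 0} \| C (H - (\mu - i \varepsilon))^{-1} C^* \| < \infty$ at infinity is precisely what is needed to discard the large-semicircle contribution as $R \to \infty$.
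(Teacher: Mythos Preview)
The paper does not contain a proof of this statement: Theorem~\ref{thm:mainFF} is quoted as a result from \cite{FaFr18_01} (``The following result is established in \cite{FaFr18_01}''), so there is no in-paper proof to compare your proposal against. Your outline is broadly consistent with what the surrounding discussion suggests is done in \cite{FaFr18_01}---namely, exploiting the finiteness of the eigenvalue set to define a Riesz projector $\Pi_{\mathrm{ac}}$, using the absence of spectral singularities to control boundary values of $C R(\lambda - i0) C^*$, and then identifying $\mathrm{Ran}(\Pi_{\mathrm{ac}})$ with $\mathrm{Ran}(W_-(H,H_0))$ via stationary formulae.

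That said, two points in your sketch deserve more care. First, in the step ruling out decay of the scattering component: the intertwining $e^{-itH}W_-(H,H_0)v = W_-(H,H_0)e^{-itH_0}v$ together with $\|e^{-itH_0}v\|=\|v\|$ only prevents $\|e^{-itH}W_-(H,H_0)v\|\to 0$ once you already know $W_-(H,H_0)$ is bounded below, i.e.\ has closed range; your invocation of ``local energy decay of $e^{-itH_0}$ along $C^*\mathcal{H}$'' is not the relevant mechanism here. So this step is genuinely downstream of the contour/projector argument, not an independent check. Second, the large-$R$ estimate on the semicircle cannot be read off directly from the hypothesis $\sup_{\mu\ge m,\varepsilon>0}\|C(H-(\mu-i\varepsilon))^{-1}C^*\|<\infty$, which concerns only real $\mu$; you will need to combine it with the trivial bound $\|R(z)\|\le |\mathrm{Im}\,z|^{-1}$ away from the real axis and the resolvent identity to control the sandwiched resolvent along the full contour. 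Neither gap is fatal, but both require an explicit argument rather than a one-line appeal.
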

Clearly, the expression of $\mathrm{Ran}( W_-( H , H_0 ) )$ obtained in \eqref{eq:RanW-FF} is more precise than that given by \eqref{eq:RanW-closed} of Theorem \ref{thm:invert_S}.  However, Theorem \ref{thm:mainFF} requires that $H$ have only finitely many eigenvalues, which we do not impose in Theorem \ref{thm:invert_S}. For dissipative Schr{\"o}dinger operators of the form \eqref{eq:a1}, unless $V$ and $W$ are exponentially decaying, $H$ may have infinitely many eigenvalues.

\section{Proofs of the main results.}\label{sec:proof}

In this section we prove our main results: Theorem \ref{thm:RF} is proven in Section \ref{subsec:thm1}, Theorem \ref{thm:invert} in Section \ref{subsec:thminvert} and Theorem \ref{thm:invert_S} in Section \ref{subsec:thminvertS}.

\subsection{Proof of Theorem \ref{thm:RF}}\label{subsec:thm1}

In this section, we prove Kuroda's representation formula for the scattering matrices $S(\lambda)$. Our proof follows the usual approach used in the self-adjoint case, see e.g. \cite{IsKi85_01},
based on the notion of abelian integrals. To simplify the presentation, in the proof below, we only consider the case  where $V=0$. Theorem \ref{thm:RF} in the general case where $V$ does not necessarily vanishes can be obtained in the same manner, using in addition the ``chain rule'', (see e.g. \cite[Section 10]{Ya00_01}).

Before going into the proof, we recall a few elementary facts. Since $H$ is a dissipative operator, we have that $\| e^{-itH} \| \leq 1$ for all $t \geq 0$ and, for all $u \in \mathcal{H}$,
\begin{equation}\label{eq:integrale1}
\int_0^{+\infty} \| C e^{-itH} u \|^2 dt \leq \frac{1}{2} \|u\|^2,
\end{equation}
(see \cite{FaFr18_01}; here we used the fact that we consider the simplified setting in which $V=0$). Moreover, by Proposition \ref{prop:standard_csq_2} (iv), assuming Hypotheses \ref{V-1} and \ref{V-5} with $V=0$, we have that $C$ is $H_0$-smooth, namely there exists $\mathrm{c}>0$ such that
\begin{equation}\label{eq:integral2}
\int_{-\infty}^{+\infty} \| C e^{-itH_0} u \|^2  dt \leq \mathrm{c} \|u\|^2,
\end{equation}
for all $u \in \mathcal{H}$. Equations \eqref{eq:integrale1}--\eqref{eq:integral2} allow us to express the wave operators $W_- (H, H_0)$ and $W_+(H_0, H)$ as (see, e.g., \cite[Appendix B]{FaFr18_01}),
\begin{align}
& W_- (H,H_0) = I - \int_0^{+\infty} e^{-itH} C^* C e^{itH_0}  dt , \label{eq:waveint} \\
& W_+(H_0,H) = I - \int_0^{+\infty} e^{itH_0} C^* C e^{itH}  dt. \label{eq:waveint_2}
\end{align}
Furthermore, the resolvents of $H_0$ and $H$ can be expressed, for all $\varepsilon >0$, by the well-known formulae
\begin{equation}\label{eq:resolvint}
R_0(\lambda+i\varepsilon) = i \int_0^{+\infty} e^{-\varepsilon  t} e^{-it(H_0 -\lambda)} dt , \quad R(\lambda+i\varepsilon) = i \int_0^{+\infty} e^{-\varepsilon  t} e^{-it(H -\lambda)} dt.
\end{equation}

Now we are ready to prove Theorem \ref{thm:RF}.

\begin{proof}[Proof of Theorem \ref{thm:RF}]
Suppose that $V=0$ and recall from Remark \ref{rk:kuroda} (i) that the scattering matrices for the pair $(H_0-iC^*C,H_0)$ are denoted by $\tilde{S}(\lambda)$. Let $f,g \in \mathcal{F}_{0}^* ( \mathrm{C}_0^{\infty}(\mathring{\Lambda}, \mathcal{M}))$. Using the direct integral representation \eqref{eq:ST1}, we have that
\begin{equation} \label{eq:intmatrices}
\big \langle (S(H,H_0) -1)f,g \big \rangle_{\mathcal{H}}  = \int_{\Lambda} \big \langle ( \tilde{S}(\lambda)-1) \Gamma_0(\lambda)f, \Gamma_0(\lambda)f \big \rangle_{\mathcal{M}} \ d\lambda.
\end{equation}
We decompose
\begin{align}
\big \langle(S(H,H_0) -1)f,g \big \rangle_{\mathcal{H}} &= \big \langle W_+(H,H_0) (W_- (H,H_0)-1)f,g \big \rangle_{\mathcal{H}} + \big \langle (W_+(H_0,H)-1)f,g \big \rangle_{\mathcal{H}} \notag \\
&=: \mathrm{(I)} + \mathrm{(II)}. \label{eq:defI-II}
\end{align}

\vspace{0.2cm}\noindent
\textbf{Step 1.} We begin by studying the second term in \eqref{eq:defI-II}. Using \eqref{eq:waveint_2} and \eqref{eq:resolvint}, we obtain that
\begin{align*}
\mathrm{(II)} &=- \int_0^{+\infty} \big \langle e^{itH_0} C^* C e^{-itH}f,g \big \rangle_{\mathcal{H}}  dt \\
                     &= - \int_0^{+\infty} \int_\Lambda \big \langle \Gamma_0(\lambda)  C^* C e^{-it(H-\lambda)}f, \Gamma_0(\lambda) g \big \rangle_{\mathcal{M}} d\lambda  dt \\
                     &= i\   \lim_{ \varepsilon \downarrow 0 }\int_\Lambda \bigg \langle\Gamma_0(\lambda)  C^* C
                            \left( i \int_0^{+\infty} e^{-t\varepsilon}  e^{-it(H-\lambda)}f  dt \right), \Gamma_0(\lambda) g \bigg \rangle_{\mathcal{M}} d\lambda \\
                     &=  i\ \lim_{ \varepsilon \downarrow 0 } \int_\Lambda \big \langle \Gamma_0(\lambda)
                           C^* C R(\lambda+i \varepsilon)f, \Gamma_0(\lambda) g \big \rangle_{\mathcal{M}}  d\lambda \\
                     &=  i\ \int_\Lambda \big \langle \Gamma_0(\lambda)  C^* C R(\lambda+i0)f, \Gamma_0(\lambda) g \big \rangle_{\mathcal{M}} d\lambda.
\end{align*}
In the last equality, we used that $CR(\lambda + i0)f$ is well-defined for $f \in \mathcal{F}_{0}^* ( \mathrm{C}_0^{\infty}(\mathring{\Lambda}, \mathcal{M}))$, see Proposition \ref{prop:standard_csq_2} (vi).

\vspace{0.2cm}\noindent
\textbf{Step 2.} Next we study the first term in \eqref{eq:defI-II}. Using \eqref{eq:waveint} and the intertwining property \eqref{eq:inter-4}, we compute
\begin{align*}
\mathrm{(I)} &=  - \bigg \langle W_+ (H_0, H) \left(\int_0^{+\infty} e^{-itH} C^* C e^{itH_0} f \ dt \right), g \bigg \rangle_{\mathcal{H}} \\
    &=  - \int_0^{+\infty}  \big \langle e^{-itH_0} W_+ (H_0, H) C^* C e^{itH_0} f  , g \big \rangle_{\mathcal{H}}  dt\\
    &= - \int_0^{+\infty} \big \langle e^{-itH_0}  C^* C e^{itH_0} f  , g \big \rangle_{\mathcal{H}}  dt  - \int_0^{+\infty} \big \langle e^{-itH_0} (W_+ (H_0, H)-1) C^* C e^{itH_0} f  , g \big \rangle_{\mathcal{H}}  dt\\
    &=: \mathrm{(a)}+\mathrm{(b)}.
\end{align*}
The direct integral decomposition \eqref{eq:ST1} gives
\begin{align*}
\mathrm{(a)} &=  - \int_0^{+\infty}  \int_\Lambda \big \langle  \Gamma_0(\lambda) C^* C e^{it(H_0-\lambda)} f  , \Gamma_0(\lambda) g \big \rangle_{\mathcal{M}}  d\lambda dt \\
    &= -i \lim_{ \varepsilon \downarrow 0 }  \int_\Lambda \bigg \langle   \Gamma_0(\lambda) C^* C
        \left( -i \int_0^{+\infty} e^{-t\varepsilon} e^{it(H_0-\lambda)} f dt \right)
         , \Gamma_0(\lambda) g \bigg \rangle_{\mathcal{M}}  d\lambda \\
    &= -i \lim_{ \varepsilon \downarrow 0 }  \int_\Lambda \big \langle  \Gamma_0(\lambda) C^* C
         R_0(\lambda-i\varepsilon) f , \Gamma_0(\lambda) g \big \rangle_{\mathcal{M}}  d\lambda \\
        &= -i \int_\Lambda \big \langle \Gamma_0(\lambda) C^* C R_0(\lambda-i0) f , \Gamma_0(\lambda) g \big \rangle_{\mathcal{M}}  \ d\lambda,
\end{align*}
where we used \eqref{eq:resolvint} together with the fact that $C R_0(\lambda-i0) f$ is well-defined for $f \in \mathcal{F}_{0}^* ( C_0^{\infty}(\mathring{\Lambda}, \mathcal{M}))$ by Proposition \ref{prop:standard_csq_2} (vi).
A similar calculation shows that
\begin{equation*}
\mathrm{(b)} = \int_\Lambda \big \langle \Gamma_0(\lambda) C^*C R(\lambda+i0) C^* C R_0(\lambda-i0) f, \Gamma_0(\lambda)g \big \rangle_{\mathcal{M}} d\lambda.
\end{equation*}

\vspace{0.2cm}\noindent
\textbf{Step 3.} Putting together the equalities obtained in Steps 1 and 2 yields
\begin{align}
\big \langle(S(H,H_0) -1)f,g\big \rangle_{\mathcal{H}} &=  - i \int_\Lambda \big \langle\Gamma_0(\lambda) C^* C R_0(\lambda-i0) f , \Gamma_0(\lambda)g \big \rangle_{\mathcal{M}}  d\lambda \notag \\
                    &  +   \int_\Lambda \big \langle \Gamma_0(\lambda) C^*C R(\lambda+i0) C^* C R_0(\lambda-i0) f, \Gamma_0(\lambda)g \big \rangle_{\mathcal{M}}  d\lambda \notag \\
                    &  + i \int_\Lambda \big \langle \Gamma_0(\lambda)  C^* C R(\lambda+i0)f, \Gamma_0(\lambda) g \big \rangle_{\mathcal{M}}  d\lambda \notag \\
                    &=: \mathrm{(\alpha)} + \mathrm{(\beta)} + \mathrm{(\gamma)}. \label{eq:alph_beta_gamm}
\end{align}
Recall from \eqref{eq:Z0inGamma} that $Z_0(\lambda;C) = \Gamma_0(\lambda)C^*$ and from Hypothesis \ref{V-5} that $\Lambda \in \lambda \mapsto Z_0(\lambda;C) \in \mathcal{L}( \mathcal{H} ; \mathcal{M} )$ is locally Hölder continuous. We can rewrite $(\alpha)$ as
\begin{align}
 \mathrm{(\alpha)} &= - i \int_\Lambda \big \langle C R_0(\lambda-i0) f , Z_0(\lambda;C)^* \Gamma_0(\lambda)g \big \rangle_{\mathcal{H}}  d\lambda \notag \\
 &= - i \int_\Lambda \big \langle C R_0(\lambda+i0) f , Z_0(\lambda;C)^* \Gamma_0(\lambda)g \big \rangle_{\mathcal{H}}  d\lambda \notag \\
 &\quad + i \int_\Lambda \big \langle C ( R_0(\lambda+i0) - R_0(\lambda-i0) ) f , Z_0(\lambda;C)^* \Gamma_0(\lambda)g \big \rangle_{\mathcal{H}}  d\lambda . \label{eq:alpha}
\end{align}
Using again the direct integral representation \eqref{eq:ST1} gives
\begin{align*}
& \big \langle C ( R_0(\lambda+i0) - R_0(\lambda-i0) ) f , Z_0(\lambda;C)^* \Gamma_0(\lambda)g \big \rangle_{\mathcal{H}} \\
& =\int_\Lambda \big \langle \Gamma_0(\mu) ( R_0(\lambda+i0) - R_0(\lambda-i0) ) f , \Gamma_0(\mu) C^* Z_0(\lambda;C)^* \Gamma_0(\lambda)g \big \rangle_{\mathcal{M}} d\mu \\
& =\int_\Lambda \big \langle ( ( \mu - (\lambda+i0) )^{-1} - ( \mu - (\lambda-i0) )^{-1} )  \Gamma_0(\mu) f , Z_0(\mu;C)  Z_0(\lambda;C)^* \Gamma_0(\lambda)g \big \rangle_{\mathcal{M}} d\mu
\end{align*}
Using that $\mu \mapsto \Gamma_0(\mu) f$ is continuous (because $f \in \mathcal{F}_{0}^* ( \mathrm{C}_0^{\infty}(\mathring{\Lambda}, \mathcal{M}))$) and that $\mu \mapsto Z_0(\mu;C)$ is also continuous by Hypothesis \ref{V-5}, we obtain that
\begin{align*}
& \big \langle C ( R_0(\lambda+i0) - R_0(\lambda-i0) ) f , Z_0(\lambda;C)^* \Gamma_0(\lambda)g \big \rangle_{\mathcal{H}} \\
& = 2i\pi \big \langle \Gamma_0(\lambda) f , Z_0(\lambda;C)  Z_0(\lambda;C)^* \Gamma_0(\lambda) g \big \rangle_{\mathcal{M}} .
\end{align*}
Inserting this into \eqref{eq:alpha} leads to
\begin{align}
 \mathrm{(\alpha)}  &= - i \int_\Lambda \big \langle C R_0(\lambda+i0) f , Z_0(\lambda;C)^* \Gamma_0(\lambda)g \big \rangle_{\mathcal{H}}  d\lambda \notag \\
 &\quad - 2 \pi \int_\Lambda \big \langle \Gamma_0(\lambda) f , Z_0(\lambda;C)  Z_0(\lambda;C)^* \Gamma_0(\lambda) g \big \rangle_{\mathcal{M}}  d\lambda . \label{eq:alpha2}
\end{align}

We proceed in the same way for $(\beta)$, yielding
\begin{align}
\mathrm{(\beta)} &= -2i\pi \int_\Lambda \big \langle Z_0(\lambda;C) C R(\lambda+i0) C^* Z_0(\lambda;C)^* \Gamma_0(\lambda)    f, \Gamma_0(\lambda)g \big \rangle_{\mathcal{M}} d\lambda \notag \\
    & \quad + \int_\Lambda \big \langle Z_0(\lambda;C) C R(\lambda+i0) C^* C R_0(\lambda+i0) f, \Gamma_0(\lambda)g \big \rangle_{\mathcal{M}} d\lambda , \label{eq:beta}
\end{align}
and the resolvent identity gives
\begin{equation}
C R(\lambda+i0) C^* C R_0(\lambda+i0) f = -iC (R(\lambda+i0) - R_0(\lambda+i0)) f. \label{eq:resolvent_final}
\end{equation}

Combining \eqref{eq:alph_beta_gamm}, \eqref{eq:alpha2}, \eqref{eq:beta} and \eqref{eq:resolvent_final}, we obtain that
\begin{align*}
\big \langle(S(H,H_0) -I)f,g \big \rangle_{\mathcal{H}} &= - 2\pi  \int_\Lambda \big \langle Z_0(\lambda ; C ) Z_0(\lambda ; C)^* \Gamma_0(\lambda) f, \Gamma_0(\lambda)g \big \rangle_{\mathcal{M}} d\lambda \\
                    &  -2i\pi \int_\Lambda \big \langle Z_0(\lambda;C)C R(\lambda+i0) C^* Z_0(\lambda;C)^* \Gamma_0(\lambda)    f,
                    \Gamma_0(\lambda)g \big \rangle_{\mathcal{M}} d\lambda .
\end{align*}
Comparing with \eqref{eq:intmatrices}, we obtain Kuroda's representation formula for the scattering matrices for the pair $(H_0-iC^*C,H_0)$:
\begin{equation}
\tilde{S}(\lambda) = I -2\pi Z_0(\lambda;C) (I+iCR(\lambda+i0) C^*) Z_0(\lambda;C)^*.
\end{equation}
This proves \eqref{eq:S0(lambda)1}. To obtain \eqref{eq:S0(lambda)2}, it suffices to apply \eqref{eq:PALH} with $V=0$. Compactness of $\tilde{S}(\lambda)-I$ follows from Proposition \ref{prop:standard_csq_2} (v). Moreover, by Proposition \ref{prop:standard_csq_2} (ii), we see that $\mathring{\Lambda} \ni \lambda \mapsto \tilde{S}(\lambda)$ is continuous.

\vspace{0.2cm}\noindent
\textbf{Step 4.} It remains to verify that $\tilde{S}(\lambda)$ is a contraction. The adjoint of $\tilde{S}(\lambda)$ is given by
\begin{equation}
\tilde{S}^*(\lambda) = I -2\pi Z_0(\lambda;C) (I+iCR_0(\lambda+i0) C^*)^{-1} Z_0(\lambda;C)^*.
\end{equation}
Hence one can compute
\begin{align*}
&\tilde{S}^*(\lambda) \tilde{S}(\lambda) = I - 2\pi Z_0(\lambda;C) \big \{ (I+iCR_0(\lambda-i0) C^*)^{-1} + (I-iCR_0(\lambda+i0) C^*)^{-1} \big \} Z_0(\lambda;C)^* \\
& + 4\pi^2 Z_0(\lambda;C) (I+iCR_0(\lambda-i0) C^*)^{-1} Z_0(\lambda;C)^*Z_0(\lambda;C) (I-iCR_0(\lambda+i0) C^*)^{-1} Z_0(\lambda;C)^*.
\end{align*}
Observing that
\begin{align*}
Z_0(\lambda;C)^*Z_0(\lambda;C) &= \frac{1}{2i\pi} C[ R_0(\lambda+i0) - R_0(\lambda-i0)]C^* \\
&= \frac{1}{2\pi} (I + i C R_0(\lambda-i0) C^* ) + \frac{1}{2\pi} (I - i C R_0(\lambda+i0) C^* ) - \frac{1}{\pi} ,
\end{align*}
we obtain
\begin{align*}
&\tilde{S}^*(\lambda) \tilde{S}(\lambda) = I - 4\pi Z_0(\lambda;C) (I+iCR_0(\lambda-i0) C^*)^{-1} (I-iCR_0(\lambda+i0) C^*)^{-1} Z_0(\lambda;C)^*.
\end{align*}
Therefore, $\tilde{S}^*(\lambda) \tilde{S}(\lambda) - I \le 0$, which implies that $\tilde{S}(\lambda)$ is a contraction. Since $\tilde{S}(\lambda) - I$ is compact, this implies that
\begin{equation*}
\| \tilde{S}(\lambda) \|=1 ,
\end{equation*}
if $\mathrm{dim} \, \mathcal{M} = + \infty$. This concludes the proof of Theorem \ref{thm:RF} in the case where $V=0$. As mentioned above, the general case where $V$ does not necessarily vanishes can be obtained similarly, using in addition the chain rule for scattering matrices \cite[Section 10]{Ya00_01}.
\end{proof}

\subsection{Proof of Theorem \ref{thm:invert}}\label{subsec:thminvert}

As in Subsection \ref{subsec:thm1}, we prove Theorem \ref{thm:invert} in the case where $V=0$. The proof in the case where $V\neq 0$ can be obtained by straightforward modifications.

We begin with a first characterization of the notion of spectral regularity introduced in Definition \ref{def:spec-sing}.
\begin{lemma}\label{lem:criterion}
Suppose that Hypotheses \ref{V-1}--\ref{V-5} hold. Let $\lambda \in \mathring{\Lambda}$. The following conditions are equivalent
\begin{enumerate}[label=\rm{(\roman*)},leftmargin=*]
\item $\lambda$ is a regular spectral point of $H$ in the sense of Definition \ref{def:spec-sing}.
\item $\lambda$ is not an accumulation point of eigenvalues of $H$ located in $\lambda-i(0,\infty)$ and the limit
\begin{equation*}
C R ( \lambda - i 0 ) C^* := \lim_{ \varepsilon \downarrow 0 } C R ( \lambda - i \varepsilon ) C^*
\end{equation*}
exists in the norm topology of $\mathcal{L}( \mathcal{H} )$.
\item $I-iCR_V(\lambda-i0)C^*$ is invertible in $\mathcal{L}( \mathcal{H} )$.
\end{enumerate}
\end{lemma}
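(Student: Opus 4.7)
The plan is to establish the chain $(\text{iii}) \Leftrightarrow (\text{ii}) \Leftarrow (\text{i})$ and then prove $(\text{ii}) \Rightarrow (\text{i})$ by a uniform-in-$\mu$ version of the same inversion argument. The key algebraic identity that drives everything is the extension of \eqref{eq:PALH} to the lower half-plane: whenever $z$ satisfies $\mathrm{Re}(z) \in \mathring{\Lambda}$, $\mathrm{Im}(z) < 0$, and $I - iCR_V(z)C^*$ is invertible in $\mathcal{L}(\mathcal{H})$, the Birman--Schwinger-type computation gives $z \in \rho(H)$ together with
\[
I + iCR(z)C^* = \bigl(I - iCR_V(z)C^*\bigr)^{-1}.
\]
Conversely, I would first check that if $z \in \rho(H) \cap \{\mathrm{Im}(z) < 0\}$, then $I - iCR_V(z)C^*$ is injective (if $(I-iCR_V(z)C^*)v=0$, setting $u = iR_V(z)C^* v$ shows $(H-z)u=0$, hence $u=0$, hence $v = Cu = 0$), and surjective by Fredholm since $CR_V(z)C^*$ is compact (Hypothesis~\ref{V-5}). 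Thus for such $z$, $I - iCR_V(z)C^*$ is invertible iff $z \in \rho(H)$.

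Equivalence $(\text{ii}) \Leftrightarrow (\text{iii})$ then follows by a limiting argument. Assume (iii). By Proposition~\ref{prop:standard_csq_2}~(i), the map $z \mapsto CR_V(z)C^*$ is H\"older continuous on $\{z : \mathrm{Re}(z) \in \mathring{\Lambda}, \mathrm{Im}(z) \le 0\}$, so $I - iCR_V(z)C^*$ remains invertible for $z = \lambda - i\varepsilon$ with $\varepsilon > 0$ small, giving $\lambda - i\varepsilon \in \rho(H)$ and the identity
\[
CR(\lambda-i\varepsilon)C^* = CR_V(\lambda-i\varepsilon)C^* \bigl(I - iCR_V(\lambda-i\varepsilon)C^*\bigr)^{-1},
\]
whose right-hand side passes to the limit as $\varepsilon \downarrow 0$. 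Conversely, assume (ii). For $\varepsilon > 0$ small, $\lambda - i\varepsilon \in \rho(H)$, so $(I - iCR_V(\lambda-i\varepsilon)C^*)^{-1} = I + iCR(\lambda-i\varepsilon)C^*$. Both sides have norm limits as $\varepsilon \downarrow 0$, and multiplying through and passing to the limit yields that $I - iCR_V(\lambda-i0)C^*$ is invertible with inverse $I + iCR(\lambda-i0)C^*$.

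Implication $(\text{i}) \Rightarrow (\text{ii})$ is immediate from the definitions. For $(\text{ii}) \Rightarrow (\text{i})$, the strategy is to upgrade the pointwise statement to a uniform one. Starting from (ii) and hence (iii) at $\lambda$, the H\"older continuity of $\mu \mapsto CR_V(\mu - i0)C^*$ implies that $I - iCR_V(\mu - i0)C^*$ stays invertible for $\mu$ in a small compact neighborhood $K_\lambda$ of $\lambda$, and in fact, by refining $K_\lambda$ and using the joint continuity of $(\mu,\varepsilon) \mapsto CR_V(\mu - i\varepsilon)C^*$ on $K_\lambda \times [0,\delta)$, the operator $I - iCR_V(\mu - i\varepsilon)C^*$ is invertible throughout a small strip $\{\mu - i\varepsilon : \mu \in K_\lambda,\ 0 \le \varepsilon < \delta\}$, with its inverse depending continuously (in fact H\"older continuously) on $(\mu,\varepsilon)$. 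Two conclusions will follow: first, no $\lambda - i\varepsilon$ in this strip is an eigenvalue of $H$, so $K_\lambda$ contains no accumulation points of eigenvalues of $H$; second, the identity $CR(\mu - i\varepsilon)C^* = CR_V(\mu - i\varepsilon)C^*(I - iCR_V(\mu - i\varepsilon)C^*)^{-1}$ lets us transfer uniform convergence of $CR_V(\mu - i\varepsilon)C^*$ to uniform convergence of $CR(\mu - i\varepsilon)C^*$ over $\mu \in K_\lambda$, which is exactly condition (i).

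The main obstacle I expect is the first paragraph: Proposition~\ref{prop:standard_csq_2}~(ii) only asserts invertibility of $I - iCR_V(z)C^*$ in the closed upper half-plane, so the equivalence with $z \in \rho(H)$ in the lower half-plane and the resulting resolvent identity need to be established by hand via the Birman--Schwinger computation sketched above; everything else reduces to carefully propagating H\"older continuity through a Neumann-series perturbation on a compact set.
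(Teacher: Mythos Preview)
Your proposal is correct and follows essentially the same route as the paper's proof: the cycle runs through the resolvent identity $(I+iCR(z)C^*)(I-iCR_V(z)C^*)=I$ for $\mathrm{Im}(z)<0$, $z\in\rho(H)$, combined with the H\"older continuity of $z\mapsto CR_V(z)C^*$ from Proposition~\ref{prop:standard_csq_2}(i) and the Fredholm alternative. The paper argues $(\mathrm{i})\Rightarrow(\mathrm{ii})\Rightarrow(\mathrm{iii})\Rightarrow(\mathrm{i})$ while you do $(\mathrm{i})\Rightarrow(\mathrm{ii})\Leftrightarrow(\mathrm{iii})$ and $(\mathrm{ii})\Rightarrow(\mathrm{i})$, but the content is the same; your explicit Birman--Schwinger equivalence ``$z\in\rho(H)\Leftrightarrow I-iCR_V(z)C^*$ invertible'' in the open lower half-plane is exactly the mechanism the paper uses implicitly in its $(\mathrm{iii})\Rightarrow(\mathrm{i})$ step (starting from an eigenvector of $H$ rather than a kernel element of $I-iCR_V(z)C^*$).
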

\begin{proof}
As above, to simplify the presentation, we prove the lemma for $V=0$. The proof in the case where $V\neq0$ is identical.

(i)$\,\Rightarrow\,$(ii) is obvious. We prove (ii)$\,\Rightarrow\,$(iii). Let $\lambda \in \mathring{\Lambda}$ and suppose that
\begin{equation*}
C R ( \lambda - i 0 ) C^* = \lim_{ \varepsilon \downarrow 0 } C R ( \lambda - i \varepsilon ) C^*
\end{equation*}
exists in $\mathcal{L}( \mathcal{H} )$. Recall  that the limit $CR_0 (\lambda-i0) C^* = \lim_{\varepsilon \downarrow 0 } C R_0( \lambda - i \varepsilon )$ exists and that the operator $CR_0 (\lambda-i0) C^*$ is compact (see Proposition \ref{prop:standard_csq_2} (i)). Using the resolvent identity
\begin{equation}\label{eq:resolvent2}
(I+iCR(z)C^*) (I-iCR_0 (z) C^*) = I \quad {\rm{for }} \ \mathrm{Im}(z) <0 \ {\rm{and}} \ z \notin \sigma(H),
\end{equation}
we deduce immediately from \eqref{eq:resolvent2} that
\begin{equation}
(I+iCR(\lambda-i0)C^*) (I-iCR_0 (\lambda-i0) C^*) = I.
\end{equation}
It follows that $I-iCR_0 (\lambda-i0) C^*$ is injective, and thus invertible thanks to  the Fredholm alternative.

It remains to prove that (iii)$\,\Rightarrow\,$(i). Assume that the operator $A(\lambda):= I-iCR_0(\lambda-i0)C^*$ is invertible in $\mathcal{L}(\mathcal{H})$. Since the map $\lambda \mapsto A(\lambda)$ is continuous by Proposition \ref{prop:standard_csq_2} (i), it follows that $A(\mu)$ is also invertible in $\mathcal{L}(\mathcal{H})$ for all $\mu$ in a compact neighborhood $K_\lambda$ of $\lambda$. We claim that if $\mu\in K_\lambda$, $\mu$ is not an accumulation point of eigenvalues of $H$ located in $\mu - i ( 0 , \infty )$. Indeed, suppose by contradiction that $\mu - i \varepsilon$ is an eigenvalue of $H$, for $\varepsilon > 0$ small enough, and let $u\neq 0$ be such that
\begin{equation}\label{eq:vp}
(H - (\mu - i \varepsilon) )u=0.
\end{equation}
This implies that
\begin{align*}
C R_0(\mu - i \varepsilon ) (H - (\mu - i \varepsilon) )u = ( I - i C R_0( \mu - i \varepsilon ) C^* ) C u = 0.
\end{align*}
Since $I-iCR_0(\mu-i0)C^*$ is invertible in $\mathcal{L}(\mathcal{H})$, $I-iCR_0(\mu-i\varepsilon)C^*$ is also invertible for $\varepsilon>0$ small enough, by continuity. The previous equation therefore yields that $Cu=0$. Inserting this into \eqref{eq:vp} gives $(H_0 - (\mu - i \varepsilon) )u=0$, which is a contradiction since $H_0$ is self-adjoint.

Next, we must show that $iCR(\mu-i\varepsilon)C^*$ converges as $\varepsilon \downarrow 0$, for all $\mu \in K_\lambda$. Again, for $\varepsilon>0$ small enough, $I -iCR_0(\mu -i\varepsilon)C^*$ is invertible in
$\mathcal{L}(\mathcal{H})$. Using \eqref{eq:resolvent2}, we then obtain that
\begin{equation}
iCR(\mu-i\varepsilon)C^* = (I-iCR_0(\mu-i\varepsilon)C^*)^{-1} -I \rightarrow A(\mu)^{-1} -I,
\end{equation}
as $\varepsilon \downarrow 0$. Hence $\mu$ is a regular spectral point of $H$. The fact that the limits
\begin{equation*}
C R ( \mu - i 0 ) C^* = \lim_{ \varepsilon \downarrow 0 } C R ( \mu - i \varepsilon ) C^* = ( I-iCR_0(\mu-i0)C^* )^{-1} - I
\end{equation*}
is uniform in $\mu \in K_\lambda$ follows from the fact that the map $z \mapsto ( I-iCR_0(z)C^* )^{-1}$ is H{\"o}lder continuous on $\{ z \in \mathbb{C} , \, \mathrm{Re}(z) \in K_\lambda , \, \mathrm{Im}(z) \le 0 \}$, by Proposition \ref{prop:standard_csq_2} (i).
\end{proof}

Now we are ready to prove Theorem \ref{thm:invert}.
\begin{proof}[Proof of Theorem \ref{thm:invert}]
Suppose that $V=0$. Recall that the scattering matrices for the pair $(H_0-iC^*C,H_0)$ are denoted by $\tilde{S}(\lambda)$.

First, assume that $\lambda \in \mathring{\Lambda}$ is a regular spectral point of $H$. We must show that $\tilde{S}(\lambda)$ is invertible. By Lemma \ref{lem:criterion}, we know that $I-iCR_0(\lambda-i0)C^*$ is invertible. Using Theorem \ref{thm:RF}, we then compute
\begin{align*}
&\big ( I + 2\pi Z_0(\lambda;C) (I-iCR_0(\lambda-i0)C^*)^{-1}  Z_0(\lambda;C)^* \big )\tilde{S}(\lambda) \\
&= I + 2\pi Z_0(\lambda;C) \big \{ (I-iCR_0(\lambda-i0)C^*)^{-1} - (I-iCR_0(\lambda+i0)C^*)^{-1} \big \} Z_0(\lambda;C)^* \\
& - 4\pi^2 Z_0(\lambda;C) (I-iCR_0(\lambda-i0)C^*)^{-1}  Z_0(\lambda;C)^* Z_0(\lambda;C) (I-iCR_0(\lambda+i0)C^*)^{-1}  Z_0(\lambda;C)^* .
\end{align*}
Recalling that
\begin{align}
Z_0(\lambda;C)^* Z_0(\lambda;C) &= \frac{1}{2i\pi} C[ R_0(\lambda+i0) - R_0(\lambda-i0)]C^* \label{eq:Stone} \\
&= \frac{1}{2\pi}\big \{ ( I -i C R_0(\lambda+i0) C^* ) - ( I - i C R_0(\lambda-i0) C^* ) \big \} , \label{eq:Stone2}
\end{align}
we obtain that $( I + 2\pi Z_0(\lambda;C) (I-iCR_0(\lambda-i0)C^*)^{-1}  Z_0(\lambda;C)^* )\tilde{S}(\lambda) = I$. The same computation shows that $\tilde{S}(\lambda) ( I + 2\pi Z_0(\lambda;C) (I-iCR_0(\lambda-i0)C^*)^{-1}  Z_0(\lambda;C)^* \big ) = I$, and therefore $\tilde{S}(\lambda)$ is indeed invertible (and its inverse is given by \eqref{eq:S^-1}--\eqref{eq:S^-1bis}).

Suppose now that $\lambda \in \mathring{\Lambda}$ is a spectral singularity of $H$. We must show that $\tilde{S}(\lambda)$ is not invertible. We seek a non-vanishing vector $u \in \mathrm{Ker} \, \tilde{S}(\lambda) $ of the form $u = Z_0(\lambda;C) w$, for a suitable $w \in \mathcal{H}$. Using Theorem \ref{thm:RF}, a straightforward calculation shows that the relation $\tilde{S}(\lambda)u=0$ is equivalent to
\begin{equation}
Z_0(\lambda;C) (I-iCR_0(\lambda+i0)C^*)^{-1} A ( \lambda ) w =0 ,
\end{equation}
where, as above, $A ( \lambda )= I-iCR_0(\lambda-i0)C^*$. Since $\lambda$ is a spectral singularity of $H$, $A ( \lambda )$ is not invertible by Lemma \ref{lem:criterion}. Equivalently, since $CR_0(\lambda-i0)C^*$ is compact by Proposition \ref{prop:standard_csq_2} (i), $A ( \lambda )$ is not injective. Let $w \in {\rm{Ker}}\, A ( \lambda )$, $w \not=0$. We claim that $u = Z_0(\lambda;C) w \not =0$. Indeed, assuming by contradiction that $u=0$, we obtain from \eqref{eq:Stone} that
\begin{align*}
(I -iCR_0(\lambda +i0) C^* ) w &= (I-iC R_0(\lambda-i0)C^* ) w - iC (R_0(\lambda +i0)-R_0(\lambda -i0))C^*  w \\
                               &= A ( \lambda )w + 2\pi \Gamma_0(\lambda)^* Z_0(\lambda;C) w \\
                               &= A ( \lambda )w + 2\pi \Gamma_0(\lambda)^* u =0.
\end{align*}
Proposition \ref{prop:standard_csq_2} (ii) shows that $I-iCR_0(\lambda+i0)C^*$ is invertible. Hence the previous equalities give $w=0$, which is a contradiction.

This concludes the proof of Theorem \ref{thm:invert} in the case where $V=0$. As mentioned before, the proof of Theorem \ref{thm:invert} is analogous, using unitarity of the scattering matrices $S_V(\lambda)$ and replacing $\Gamma_0(\lambda)$ by $\Gamma_{\pm}(\lambda)$, and $R_0(\lambda \pm i0)$ by $R_V(\lambda \pm i0)$.
\end{proof}
We conclude this section with a consequence of Lemma \ref{lem:criterion}. It shows, as announced in Remark \ref{rk:defspec-sing} (ii), that the set of spectral singularities of $H$ in $\mathring{\Lambda}$ is closed and has Lebesgue measure $0$.
\begin{proposition}\label{prop:spec-sing}
Suppose that Hypotheses \ref{V-1}--\ref{V-5} hold. Then the set of spectral singularities of $H$ in $\mathring{\Lambda}$ is a closed set whose Lebesgue measure vanishes.
\end{proposition}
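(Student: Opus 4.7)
By Lemma \ref{lem:criterion}(iii), the set of spectral singularities in $\mathring{\Lambda}$ is
\begin{equation*}
\mathcal{S} = \{ \lambda \in \mathring{\Lambda} : A(\lambda) := I - iCR_V(\lambda-i0)C^* \text{ is not invertible in } \mathcal{L}(\mathcal{H}) \}.
\end{equation*}
The plan is to handle closedness by a standard continuity-plus-openness argument, and to prove the measure-zero assertion by combining the analytic Fredholm theorem on $\mathbb{C}_-$ with a Schur complement reduction to a scalar holomorphic determinant, to which I will then apply a Luzin--Privalov boundary uniqueness theorem.

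For closedness I will use Proposition \ref{prop:standard_csq_2}(i), which gives that $\lambda \mapsto A(\lambda)$ is norm-continuous from $\mathring{\Lambda}$ into $\mathcal{L}(\mathcal{H})$, together with the fact that $A(\lambda) - I$ is compact, so that $A(\lambda)$ is Fredholm of index zero. Since the invertible elements of $\mathcal{L}(\mathcal{H})$ form an open set, $\mathcal{S}$ is the preimage of a closed set under a continuous map, and therefore closed in $\mathring{\Lambda}$.

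For the measure-zero statement I will extend $K(z) := iCR_V(z)C^*$ to $z \in \mathbb{C}_- \cup \mathring{\Lambda}$. By Hypothesis \ref{V-5} and Proposition \ref{prop:standard_csq_2}(i), $K(z)$ is holomorphic and compact-operator valued on $\mathbb{C}_-$, with $\|K(z)\| \le \|C\|^2 / |\mathrm{Im}(z)| \to 0$ as $|\mathrm{Im}(z)|\to\infty$. The analytic Fredholm theorem then yields a discrete set $\Sigma \subset \mathbb{C}_-$ (precisely the eigenvalues of $H$ in $\mathbb{C}_-$, as one checks via the identification $u \leftrightarrow R_V(z)C^*u$) outside of which $I - K(z)$ is invertible. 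Fix now $\lambda_0 \in \mathring{\Lambda}$. Exploiting compactness of $K(\lambda_0)$ and norm-continuity of $K$ up to the real axis, I will pick a finite-rank orthogonal projection $P$ on $\mathcal{H}$ and a closed rectangle $\overline{\mathcal{R}} := [\lambda_0-\delta,\lambda_0+\delta] + i[-\delta',0]$ with $[\lambda_0-\delta,\lambda_0+\delta] \subset \mathring{\Lambda}$ such that $\|P^\perp K(z) P^\perp\| < 1/2$ uniformly on $\overline{\mathcal{R}}$. The Schur complement formula for the block decomposition $\mathcal{H} = P\mathcal{H} \oplus P^\perp \mathcal{H}$ then reduces the invertibility of $I - K(z)$ on $\mathcal{H}$ to the invertibility of the finite-dimensional operator
\begin{equation*}
M(z) := I - PK(z)P - PK(z)P^\perp\bigl(I - P^\perp K(z) P^\perp\bigr)^{-1} P^\perp K(z) P
\end{equation*}
on $P\mathcal{H}$. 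Consequently $d(z) := \det M(z)$ is bounded on $\overline{\mathcal{R}}$, holomorphic on $\mathcal{R} \cap \mathbb{C}_-$, continuous on $\overline{\mathcal{R}}$, and its zero set coincides exactly with the set of $z$ at which $I - K(z)$ fails to be invertible.

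Since $\Sigma$ is discrete in $\mathbb{C}_-$, $d$ is not identically zero on $\mathcal{R} \cap \mathbb{C}_-$. The Luzin--Privalov boundary uniqueness theorem, applied to the simply connected domain $\mathcal{R} \cap \mathbb{C}_-$ (e.g.\ after a conformal identification with the upper half-disk), then forces the boundary zero set of $d$ on $(\lambda_0-\delta,\lambda_0+\delta)$---which coincides with $\mathcal{S} \cap (\lambda_0-\delta,\lambda_0+\delta)$---to have one-dimensional Lebesgue measure zero. Covering $\mathring{\Lambda}$ by countably many such neighborhoods concludes the proof. The main obstacle I foresee is arranging a single finite-rank $P$ that uniformizes the remainder block $P^\perp K(z) P^\perp$ over a whole rectangle reaching down to the real axis: this requires the compactness of $K(\lambda_0)$ (to obtain a finite-rank spectral projection at $\lambda_0$ for which $\|K(\lambda_0) P^\perp\|$ is small) and the norm continuity of $z \mapsto CR_V(z)C^*$ up to $\mathring{\Lambda}$ granted by Proposition \ref{prop:standard_csq_2}(i) (to propagate that smallness to a full rectangle), after which the analytic and determinant structure of $M(z)$ and the boundary-uniqueness step proceed routinely.
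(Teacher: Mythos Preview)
Your proof is correct. The closedness argument matches the paper's exactly: both use the characterization of Lemma~\ref{lem:criterion}(iii) together with continuity of $\lambda \mapsto I - iCR_V(\lambda-i0)C^*$ and openness of the invertibles in $\mathcal{L}(\mathcal{H})$.

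For the measure-zero statement, the paper simply invokes Theorem~1.8.3 of Yafaev's monograph \cite{Ya92_01} as a black box, whereas you unpack essentially the content of that theorem: analytic Fredholm in $\mathbb{C}_-$ to ensure the zero set is discrete in the interior, a Schur-complement reduction (using compactness of $K(\lambda_0)$ and the uniform norm continuity up to the boundary supplied by Proposition~\ref{prop:standard_csq_2}(i)) to pass to a scalar holomorphic determinant $d(z)$ that is bounded and continuous on a closed rectangle reaching the real axis, and finally Luzin--Privalov boundary uniqueness to conclude that the real zeros of $d$ have Lebesgue measure zero. This is a genuinely more self-contained route; what you gain is independence from the external reference and transparency about exactly which ingredients (compactness, continuity up to $\mathring{\Lambda}$, analyticity in $\mathbb{C}_-$, nontriviality from the discreteness of interior zeros) are being used. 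The paper's route is shorter but leans on a result whose proof is, in substance, what you have written out. One small remark: when transporting Luzin--Privalov from the disk to your rectangle via a conformal map, you are implicitly using that the boundary correspondence is absolutely continuous on the smooth part of $\partial\mathcal{R}$ (so that positive-measure boundary sets are preserved); this is standard for piecewise-analytic boundaries, but worth a word if you write this up formally.
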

\begin{proof}
Let
\begin{align*}
E := \{ \lambda \in \mathring{\Lambda} , \, \lambda \text{ is a spectral singularity of } H \}.
\end{align*}
The fact that $E$ is closed is a direct consequence of Lemma \ref{lem:criterion}. Indeed, it follows from the equivalence (i)$\,\Leftrightarrow\,$(iii) in Lemma \ref{lem:criterion} that the set of regular spectral points of $H$ in $\mathring{\Lambda}$ is open.

To prove that the Lebesgue measure of $E$ is $0$, it suffices to apply Theorem 1.8.3 in \cite{Ya92_01}.
\end{proof}

\subsection{Proof of Theorem \ref{thm:invert_S}}\label{subsec:thminvertS}

In this section, we prove Theorem \ref{thm:invert_S}, using in particular Theorem \ref{thm:invert}.

\begin{proof}[Proof of Theorem \ref{thm:invert_S}]
Suppose that $H$ does not have spectral singularities. It then follows from Theorem \ref{thm:invert} that, for all $\lambda \in \mathring{\Lambda}$, $S(\lambda)^{-1}$ exists and is given by
\begin{align*}
S(\lambda)^{-1} = S_V(\lambda)^{-1} \Big( I + 2\pi Z_V^+(\lambda;C) (I+iCR(\lambda-i0) C^*) Z_V^+(\lambda;C)^* \Big) .
\end{align*}
Using that
\begin{equation*}
Z_V^+(\lambda;C)^* Z_V^+(\lambda;C) = \frac{1}{2i\pi} C \big ( R_V( \lambda + i 0 ) - R_V( \lambda - i 0 ) \big ) C^* ,
\end{equation*}
together with Hypothesis \ref{V-5}, we see that $\mathring{\Lambda} \ni \lambda \mapsto Z_V^+(\lambda;C)$ is bounded. Moreover, since $\Lambda \setminus \mathring{\Lambda}$ is finite and since all $\lambda \in \Lambda \setminus \mathring{\Lambda}$ are regular in the sense of Definition \ref{def:spec-sing}, we deduce that $\mathring{\Lambda} \ni \lambda \mapsto CR(\lambda-i0) C^*$ is also bounded. By Proposition \ref{prop:standard_csq} (viii), $S_V(\lambda)$ is unitary. Therefore $\mathring{\Lambda} \ni \lambda \to S(\lambda)^{-1}$ is bounded. This implies that $S(H,H_0)$ is invertible with inverse given by
\begin{equation*}
S(H,H_0)^{-1} = \mathcal{F}_0^* \bigg( \int_\Lambda^\oplus S(\lambda)^{-1} d\lambda \bigg) \mathcal{F}_0.
\end{equation*}

Suppose now that $H$ has a spectral singularity $\lambda_0 \in \mathring{\Lambda}$. By Theorem \ref{thm:invert}, we know that $S(\lambda_0)$ is not invertible in $\mathcal{M}$. Since $S(\lambda_0)-I$ is compact by Theorem \ref{thm:RF}, this means that there exists $u_{\lambda_0} \in \mathcal{M}$, $\| u_{\lambda_0} \|_{\mathcal{M}}Ê= 1$, such that $S(\lambda_0) u_{\lambda_0} = 0$. Let $\varepsilon > 0$. Since $\lambda \mapsto S(\lambda)$ is continuous by Theorem \ref{thm:RF}, there exists $\delta>0$ such that, for all $\lambda \in [ \lambda_0 - \delta , \lambda_0 + \delta ]$, $\| S( \lambda ) - S( \lambda_0 ) \| \le \varepsilon$. Let $h \in \mathrm{C}_0^\infty( [ - \delta , \delta ] ; \mathbb{C} )$ be a function such that $\| h \|_2 = 1$ and let $u_\varepsilon \in L^2( \Lambda ; \mathcal{M} )$ be defined by
\begin{equation*}
u_\varepsilon(\lambda) = h( \lambda - \lambda_0 ) u_{\lambda_0}.
\end{equation*}
Note that $\| u_\varepsilon \|_{L^2(\Lambda ; \mathcal{M} ) }  = 1$. We compute
\begin{align*}
\big \|ÊS(H,H_0) \mathcal{F}_0^* u_\varepsilon \big \|^2_{\mathcal{H}}  &= \int_\Lambda \big \| h( \lambda - \lambda_0 ) S(\lambda) u_{\lambda_0} \big \|_{\mathcal{M}}^2 d\lambda \\
& \le \int_{ [ \lambda_0 - \delta , \lambda_0 + \delta ] } | h ( \lambda - \lambda_0 ) |^2 \big \| ( S( \lambda ) - S( \lambda_0 ) ) u_{\lambda_0} \big \|_{\mathcal{M}}^2 d \lambda ,
\end{align*}
where we used that $S(\lambda_0)u_{\lambda_0}=0$ and that $h$ is supported in $[-\delta , \delta]$. Since $\| u_{\lambda_0} \|_{\mathcal{M}}=1$ and $\| S(\lambda) - S(\lambda_0) \| \le \varepsilon$ for $\lambda \in [ \lambda_0 - \delta , \lambda_0 + \delta ]$, this implies that
\begin{align*}
\big \|ÊS(H,H_0) \mathcal{F}_0^* u_\varepsilon \big \|^2_{\mathcal{H}}  &\le \varepsilon^2 \int_{ [ \lambda_0 - \delta , \lambda_0 + \delta ] } | h ( \lambda - \lambda_0 ) |^2 d \lambda = \varepsilon^2 ,
\end{align*}
since $\| h \|_2 = 1$. Hence we have proven that for all $\varepsilon>0$, there exists a normalized vector $v_\varepsilon = \mathcal{F}_0^* u_\varepsilon$ such that $\|ÊS( H , H_0 ) v_\varepsilon \| \le \varepsilon$. This shows that $S(H,H_0)$ is not injective with closed range, hence not invertible.

The fact that
\begin{align*}
Ê\mathrm{Ran}( W_-( H , H_0 ) ) = \big ( \mathcal{H}_{ \mathrm{b} }( H ) \oplus \mathcal{H}_{ \mathrm{d} }( H^* ) \big )^\perp,
\end{align*}
if $S(H,H_0)$ is invertible, is proven in \cite[Proposition 3.8]{FaFr18_01}.
\end{proof}

\vspace{0.3cm}\noindent
{\it{Acknowledgements:}}  We would like to warmly thank Xue Ping Wang for his kind encouragement and useful discussions. We also thank Serge Richard for useful comments. J.F. is grateful to J{\"u}rg Fr{\"o}hlich for many discussions and collaborations.

\vspace{0.3cm}

\end{document}